\theoremstyle{thmstyleone}%
\newtheorem{theorem}{Theorem}
\theoremstyle{thmstyletwo}%
\newtheorem{example}{Example}%
\theoremstyle{thmstylethree}%
\begin{document}

\title[A posteriori error estimates for the exponential midpoint method  for  linear and semilinear parabolic equations]{A posteriori error estimates for the exponential midpoint method  for  linear and semilinear parabolic equations}


\author[1]{\fnm{Xianfa} \sur{Hu}}\email{zzxyhxf@163.com}

\author*[1]{\fnm{Wansheng} \sur{Wang}}\email{w.s.wang@163.com}

\author[1]{\fnm{Mengli} \sur{Mao}}\email{mmlmath@163.com}

\author[1]{\fnm{Jiliang} \sur{Cao}}\email{jiliangcao@shnu.edu.cn}

\affil[1]{\orgdiv{Department of Mathematics}, \orgname{Shanghai Normal University}, \orgaddress{\street{ No. 100 Guilin Road}, \city{Shanghai}, \postcode{200234}, \state{Shanghai}, \country{P.R.China}}}


\abstract{In this paper, the a posteriori error estimates of the exponential midpoint method for time discretization are studied  for linear and  semilinear parabolic equations. Using the exponential midpoint approximation defined by  a  continuous and  piecewise linear interpolation  of  nodal values   yields the suboptimal order estimates. Based on the property of the entire function, we introduce a continuous and  piecewise  quadratic time reconstruction of the exponential midpoint method to  derive the  optimal order estimates, and the error bounds are solely  dependent on the discretization parameters, the data of the problem and the approximation of the entire function. Several  numerical examples are implemented to illustrate the  theoretical results.}

\keywords{Parabolic  differential equations,  a posteriori error estimates,  exponential midpoint method,  exponential midpoint reconstruction}

\maketitle
\section{Introduction}
This paper is devoted to deriving   optimal order a  posteriori error estimates  for 
the exponential midpoint method in time for linear and semilinear parabolic problems
\begin{equation}\label{nonlinear problem}\left\{
\begin{array}{l}
u'(t)+Au(t)=B(t,u(t)),\quad 0\leq t\leq T,\cr\noalign{\vskip1truemm} u(0)=u^0,
\end{array}
\right.\end{equation}
{where}  the operator $A: D(A)\rightarrow H$ is positive definite, self-adjoint and linear  on a Hilbert space $( H,(\cdot,\cdot))$ with domain $D(A)$ dense in $H$, $B(t, \cdot) : D(A)\rightarrow H, \ t\in [0,T]$, and the initial value $u^0\in H$. It is well known that the reaction-diffusion equations and incompressible
Navier-Stokes equations fit into this framework \cite{Henry1981,Lunardi1995}. Problems of the form \eqref{nonlinear problem} arise frequently in  various fields such as mechanics, engineering and other applied sciences.

 The idea of exponential integrators is very old, Lawson \cite{Lawson1967} formulated explicit Runge-Kutta methods with A-stability by using the exponential functions, Friedli developed this idea in \cite{Friedli1978}, and exponential multistep methods have been considered in \cite{Lambert1972,Verwer1977}. Generally,  exponential 
 integrators provide higher accuracy and stability than non-exponential ones for stiff systems and highly oscillatory problems, and its implementation depends on the computing   the product of an exponential function with a vector, with the new methods for evaluating  it (see, e.g., \cite{Hochbruck1997,Berland2007}), exponential integrators have been received more attention \cite{Norsett1978,Cox2002,Hochbruck2005a,Krogstad2005,Moret2005,Hochbruck2010,Li2016,Mei2017,Du2019,WangBin2019,Du2021,Li2022}. It is worth mentioning
that
Hochbruck et al. \cite{Hochbruck2010} systematically presented exponential one step and multistep methods, which were based on the stiff order conditions. In this paper, we focus on exponential one step methods. The exponential Runge-Kutta methods of collocation type were proposed in   \cite{Hochbruck2005b}. However,  optimal order a posteriori estimates for exponential Runge-Kutta methods of collocation type  is still an open problem.  There are some works about the earliest significant contribution to a posteriori analysis for time or space discrete approximations for evolution problems
\cite{Johnson1990,Noch2000,Giles2002,Makri2003}. Akrivis et al.  derived the a posteriori error estimates  of evolution problems  for Crank-Nicolson method \cite{Akrivis2006}, and continuous Galerkin method \cite{Akrivis2009}. Makridakis and Nochetto \cite{Makri2006} considered  the a posteriori error estimates for discontinuous Galerkin method. A posteriori error analysis for Crank-Nicolson-Galerkin  type methods and fully  discrete finite element method  for the generalized diffusion equations with delay have been derived by Wang et al. \cite{Wang2018,Wang2020,Wang2022b}.  Unfortunately, there is no article devoting to the a posteriori error analysis for standard exponential integrators for solving parabolic differential equations. Owing to this, our study starts by deriving the a posteriori error estimates for the exponential midpoint
method. Considering the property of the exponential function, we introduce the continuous,
linear piecewise  interpolation and quadratic time reconstruction of the exponential midpoint method to obtain the suboptimal and optimal estimates, respectively.

This paper is mainly presented the a posteriori error estimates of  the exponential midpoint method for linear parabolic equations with $B(t,u(t))=f(t)$ as a given forcing function $f$. We have a brief discussion on the a posteriori error estimates
for semilinear problems.  The paper is organized as follows. Section \ref{sec2} introduces the necessary notation, and briefly
reviews  exponential Runge-Kutta methods of collocation type and exponential midpoint method. In Section \ref{sec3}, by introducing a continuous approximation $U$ in time, the residual-based a posterior error bounds, which are suboptimal order, are derived for linear parabolic problems by using the energy techniques. To derive optimal order a posteriori error estimates,  the quadratic exponential midpoint reconstruction $\hat{U}$ is introduced in Section \ref{sec4}. Then a posteriori error estimates in $L^2(0,t;V)$- and $L^{\infty}(0,t;H)$-norms are derived for the exponential midpoint method for linear problems and the difference $\hat{U}-U$ is computed in Section \ref{sec5}. Section \ref{sec6} is devoted to the extension of the a posteriori error analysis of the  exponential midpoint method from linear problems to semilinear problems. The effectiveness of the a posteriori error estimators is verified by several numerical examples in Section \ref{sec7}.  The last section is concerned with concluding remarks.

\section{The exponential midpoint method for parabolic problems}\label{sec2}

Now we consider the exponential midpoint method for solving parabolic problems \eqref{nonlinear problem}. At first,  some appropriate assumptions and  notations are needed.

\subsection{Assumptions and notations}
Let $V:=D(A^{\frac{1}{2}})$ and denote the norms in $H$ and $V$ by $|\cdot|$ and $\|\cdot\|$, $\|v\|=|A^{\frac{1}{2}}v|=(Av,v)^{\frac{1}{2}}$, respectively. We identify $H$ with its dual, and let $V^{\star}$
be the dual of $V$ ($V\in H \in V^{\star}$), and denote by $\|\cdot\|$ the dual norm on $V^{\star}$, $\|v\|_{\star}=|A^{-\frac{1}{2}}v|=(v,A^{-1}v)^{\frac{1}{2}}.$  We still denote by $(\cdot,\cdot)$ the duality pairing between $V^{\star}$ and $V$, and introduce the Lebesgue spaces $L^p(J;X)$ with the interval $J$ and Banach space $X$ ($X=H,\ V \ or \ V^{\star}$)  and $L^p$-norm of $\|u(t)\|_{X}$ , $1\leq p < \infty$ are
\begin{equation*}
\|u(t)\|_{L^p(J;X)}=\Big(\int_{J} \|u(t)\|_{X}^{p}dt \Big)^{1/p},
\end{equation*}
is finite. The space $L^{\infty}(J;X)$    with interval $J$ and Banach space $X$, and the $L^{\infty}$-norm of $\|u(t)\|_{X}$ are
\begin{equation*}
\|u\|_{L^{\infty}(J;X)}:= \text{ess} \sup\limits_{t \in J} \|u(t)\|_{X}.
\end{equation*}
For the sake of convenience, we write  $L^p(0,t;X)$ for  $L^p((0,t);X)$. Suppose that  the Poincar\'{e} inequality holds:
\begin{equation}
|u|\leq \frac{1}{\sqrt{\lambda_1}}\|u\|, \quad \forall u \in V,
\end{equation}
here $\lambda_1>0$ is the first eigenvalue of the operator $A$.

\subsection{Exponential Runge-Kutta methods of collocation type} Let $ 0= t^0 < t^1 < \cdots < t^{N} = T$ be a partition of $[0, T]$, $I_n:=(t^{n-1},t^n]$, and $k_n:=t^n-t^{n-1}$. It is well known that  the main idea of exponential integrator is
concerned with  the  Volterra integral equation
 \begin{equation}\label{variation of constants}
\displaystyle u(t^{n})=e^{-k_nA}u(t^{n-1})+\int_{t^{n-1}}^{t^n} e^{-(t^n-\tau)A}B(\tau,u(\tau))d\tau,
 \end{equation}
also called \emph{variation-of-constants} formula for solving  semilinear parabolic differential equations (here, the nonlinear term is $B(\tau,u(\tau))$).

An $s$-stage Exponential Runge--Kutta (ERK) method for \eqref{nonlinear problem} is defined as
\begin{equation}\label{ERK}
\left\{
\begin{array}{l}
\displaystyle U^{n-1,i}=e^{{-c}_ik_nA}U^{n-1}+k_n\sum\limits_{j=1}^sa_{ij}(-k_nA)B(t^{n-1,j},U^{n-1,j}), \cr\noalign{\vskip4truemm}
\displaystyle U^{n}=e^{-k_nA}U^{n-1}+k_n\sum\limits_{i=1}^sb_{i}(-k_nA)B(t^{n-1,i},U^{n-1,i});
\end{array}
\right.
\end{equation}
{where}  the coefficients $a_{ij}(-k_nA)$, $b_i(-k_nA)$
 are exponential functions of $k_nA$ for ${i,j=1,\ldots s}$, 
 $t^{n-1,i}=t^{n-1}+c_ik_n$, $U^{n-1,i}\approx u(t^{n-1}+c_ik_n)$ for $i=1,\ldots,s$. The method (\ref{ERK}) can  be  represented  by the  Butcher tableau
\begin{equation}\label{Tableau-REK}
\begin{aligned}
\begin{tabular}{c|ccc}
$c_1$&$a_{11}(-k_nA)$&$\cdots$ &$a_{1s}(-k_nA)$\\
 $\raisebox{-1.3ex}[1.0pt]{$\vdots$}$&$\raisebox{-1.3ex}[1.0pt]{$\vdots$}$& & $\raisebox{-1.3ex}[1.0pt]{$\vdots$}$\\
  $\raisebox{-1.3ex}[1.0pt]{$c_s$}$ &  $\raisebox{-1.3ex}[1.0pt]{$a_{s1}(-k_nA)$}$ & $\raisebox{-1.3ex}[1.0pt]{$\cdots$}$&   $\raisebox{-1.3ex}[1.0pt]{$a_{ss}(-k_nA)$}$\\[6pt]
 \hline
&$\raisebox{-1.3ex}[1.0pt]{$b_1(-k_nA)$}$&\raisebox{-1.3ex}[1.0pt]{$\cdots$} &  $\raisebox{-1.3ex}[1.0pt]{$b_s(-k_nA)$}$\\
\end{tabular}.
\end{aligned}
\end{equation}

For $0 < c_1 < \cdots <  c_s \leq 1$,  ERK methods of collocation  type can be viewed as  using the interpolation polynomial $P_{s-1}$ of degree $s-1$   about  collocation nodes  $\{c_i\}_{i=1}^s$ to  approximate the nonlinear term $B(\tau,u(\tau))$ in the Volterra integral equation (\ref{variation of constants}). Hence,  the  coefficients $a_{ij}(-k_nA)$ and $b_i(-k_nA)$ are defined as follows:
\begin{equation}\label{ERK weight}
a_{ij}(-k_nA)=\int_0^{c_i} e^{-(c_i-\tau)k_nA}L_j(\tau) d\tau, \quad  b_i(-k_nA)=\int_0^1 e^{-(1-\tau)k_nA}L_i(\tau)d\tau,
\end{equation}
for $i,j=1,\ldots,s$, where $L_1(\tau), \cdots, L_s(\tau) $ are the Lagrange interpolation polynomials
\begin{equation}\label{lagrange interpolation}
L_i(\tau)=\prod\limits_{m\neq i}\frac{\tau-c_m}{c_i-c_m}.
\end{equation}
 Let $q_1$ and $q_2$ be the largest integers such that
\begin{equation}\label{order conditions}
\begin{aligned}
&\sum\limits_{i=1}^s b_i(-k_nA)\frac{c_i^{k-1}}{(k-1)!}=\varphi_k(-k_nA), \qquad  \quad k=1,\ldots,q_1,\\
&\sum\limits_{j=1}^s a_{ij}(-k_nA)\frac{c_j^{k-1}}{(k-1)!}=\varphi_k(-c_ik_nA)c_i^k, \quad k=1,\ldots,q_2,\quad i=1,\ldots,s,
\end{aligned}
\end{equation}
with the entire functions
\begin{equation}\label{entire function}
\varphi_k(-c_ik_nA)=\int_0^1 e^{-(1-\tau)c_ik_nA}\frac{\tau^{k-1}}{(k-1)!}d\tau,  \quad i=1,\ldots,s,
 \end{equation}
 where $q_1$ is  the order of the internal stages  and $q_2$ is  the order of the  update. It is clear that if we consider  the limit $A\rightarrow \bf{0}$, the formulas (\ref{ERK weight}) and (\ref{order conditions}) reduce to \begin{equation}\label{RK coefficients}
a_{ij}=\int_0^{c_i} L_j(\tau) d\tau, \quad  b_i=\int_0^1 L_i(\tau)d\tau,\quad i,j=1,\ldots,s,
\end{equation}
and
\begin{equation}\label{RK order conditions}
\begin{aligned}
&\sum\limits_{i=1}^s b_i\frac{c_i^{k-1}}{(k-1)!}=\frac{1}{k!}, \qquad  k=1,\ldots,q_1,\\
&\sum\limits_{j=1}^s a_{ij}\frac{c_j^{k-1}}{(k-1)!}=\frac{c_i^k}{k!}, \quad k=1,\ldots,q_2, \quad i=1,\ldots,s.
\end{aligned}
\end{equation}
This implies that the ERK method of collocation type reduces to a RK method of collocation type when $A\rightarrow \bf{0}$, therefore the limit method is called  the underlying RK method. Hochbruck et al. \cite{Hochbruck2005b} have illustrated that an $s$-stage ERK  method of collocation type converges with order $\min(s+1,p)$ for linear parabolic problems, where $p$ denotes the classical (non-stiff) order of the underlying RK method. For semilinear parabolic problems, higher order ERK methods were formulated  like  the exponential Gauss methods in some special cases (sufficient temporal and spatial smoothness).

\subsection{The  exponential midpoint method for parabolic problems}\label{EMR} In this paper, we consider the exponential midpoint method, the simplest ERK method,  for solving  \eqref{nonlinear problem} (see \cite{Hochbruck2010}):
 \begin{equation}\label{IMERK12}
\left\{
\begin{array}{l}
\displaystyle
U^{n-\frac{1}{2}}=e^{-\frac{1}{2}k_nA}U^{n-1}+\frac{k_n}{2}\varphi_1\big(-\frac{1}{2}k_nA\big)B(t^{n-\frac{1}{2}},U^{n-\frac{1}{2}}), \cr\noalign{\vskip4truemm}
\displaystyle U^{n}=e^{-k_nA}U^{n-1}+k_n\varphi_1(-k_nA)B(t^{n-\frac{1}{2}},U^{n-\frac{1}{2}}),
\end{array}
\right.
\end{equation}
    which can be denoted by the Butcher tableau
 \begin{equation}\label{Tableau-ERK12}
\begin{aligned}
\begin{tabular}{c|c}
 $\frac{1}{2}$&$\frac{1}{2}\varphi_1(-\frac{1}{2}k_nA)$\\[4pt]
 \hline
& $\raisebox{-1.3ex}[1.0pt]{$\varphi_1(-k_nA)$}$\\
\end{tabular}
\end{aligned}
\end{equation}
   where $\varphi_1(-k_nA)$ and  $\varphi_1(-\frac{1}{2}k_nA)$ are defined by \eqref{entire function}.


\section{A posteriori error estimates for linear problems} \label{sec3}
We consider  the  linear parabolic differential equations (LPDEs):
\begin{equation}\label{linear problem}\left\{
\begin{array}{l}
u'(t)+Au(t)=f(t),\quad 0\leq t \leq T,\cr\noalign{\vskip1truemm} u(0)=u^0,
\end{array}
\right.\end{equation}
where the forcing term $f$ is a sufficiently smooth function.
It is worth noting that if we apply an  ERK method of collocation type  to LPDEs, the method reduces to the exponential quadrature rule
\begin{equation}\label{ERK-update}
U^{n}=e^{-k_nA}U^{n-1}+k_n\sum\limits_{i=1}^s b_{i}(-k_nA)f(t^{n-1,i}),
\end{equation}
where  the weights are 
\begin{equation}\label{the quadrature weight}
 b_i(-k_nA)=\int_0^1 e^{-(1-\tau)k_nA}L_i(\tau)d\tau,\quad  i=1,\ldots,s.
\end{equation}

\subsection{Exponential midpoint rule} For given $\{v^n\}_{n=0}^N$, we introduce the notation
\begin{equation*}
\bar{\partial} v^n:=\frac{v^n-v^{n-1}}{k_n},\quad  v^{n-\frac{1}{2}}:=\frac{v^n+v^{n-1}}{2}, \quad  n=1,\ldots,N.
\end{equation*}
For \eqref{linear problem}, the  method \eqref{IMERK12} reduces to the  exponential midpoint rule:
 \begin{equation}\label{exponentail M}
U^{n}=e^{-k_nA}U^{n-1}+k_n\varphi_1(-k_nA)f(t^{n-\frac{1}{2}}),\quad n=1,\ldots,N.
\end{equation}
Combining the recurrence relation
\begin{equation}\label{phifunction}
\varphi_{k+1}(z)=\frac{\varphi_k(z)-\varphi_k(0)}{z},\quad \varphi_0(z)=e^z, \quad k\geq1,
\end{equation}
and $\varphi_k(0)=1/k!$, we have
\begin{equation*}
  e^{-k_nA}=I-k_n\varphi_1(-k_nA)A.
\end{equation*}
 Hence,   $U^n$ can be rewritten as
\begin{equation}
U^{n}=U^{n-1}+k_n\varphi_1(-k_nA)\big(f(t^{n-\frac{1}{2}})-AU^{n-1}\big),
\end{equation}
which implies that
\begin{equation}\label{another form of ERK-M}
\bar{\partial}U^n+\varphi_1(-k_nA)AU^{n-1}=\varphi_1(-k_nA)f(t^{n-\frac{1}{2}}), \quad n=1,\ldots,N,
\end{equation}
with $U^0:=u^0$. In \cite{Hochbruck2005b}, Hochbruck et al. have presented that if  $f^{(s+1)}\in L^{1}(0,T;X)$, and the ERK
method satisfies the conditions
\begin{equation}\label{additional conditions}
\sum\limits_{i=1}^sb_i(0)c_{i}^{s}=\frac{1}{s+1},
\end{equation}
then the  error bound  $\|u(t^n)-U^{n}\|\leq Ch^{s+1}$ holds on $0\leq t^n \leq T$. As a consequence, it is easy to verify that the exponential midpoint rule is of second order.

The  exponential midpoint  approximation $U : \ [0,T] \rightarrow D(A)$ to $u$ is defined by using the linear interpolation between the nodal values $U^{n-1}$ and $U^n$,
\begin{equation}\label{linear interpolation}
U(t)=U^{n-1}+(t-t^{n-1})\frac{U^n-U^{n-1}}{k_n}, \quad  t \in I_n.
\end{equation}
Let $R(t)\in H$ be the residual of $U$,
\begin{equation}
R(t)=U^{\prime}(t)+AU(t)-f(t),\quad t \in I_n,
\end{equation}
which can be considered as the approximation solution $U$ misses the exact solution of \eqref{linear problem}.  A direct calculation yields
\begin{equation}
U^{\prime}(t)+AU(t)=\bar{\partial}U^n+AU^{n-1}+(t-t^{n-1})A\bar{\partial}U^n, \quad t \in I_n.
\end{equation}
It follows from \eqref{another form of ERK-M} that
\begin{equation*}
U^{\prime}(t)+AU(t)=(I-\varphi_1(-k_nA))AU^{n-1}+\varphi_1(-k_nA)f(t^{n-\frac{1}{2}})+(t-t^{n-1})A\bar{\partial}U^n, \quad t \in I_n.
\end{equation*}
Therefore, the residual $R(t)$ of $U$ is
\begin{equation*}
R(t)=\varphi_1(-k_nA)f(t^{n-\frac{1}{2}})-f(t)+(I-\varphi_1(-k_nA))AU^{n-1}+(t-t^{n-1})A\bar{\partial}U^n, \quad t \in I_n.
\end{equation*}
Using  the recurrence relation \eqref{phifunction} and
 \begin{equation}\label{series of phi}
 \varphi_{k+1}(z)=\sum\limits_{j=0}^{\infty} \frac{z^j}{(j+k+1)!},
 \end{equation}
 we get 
 \begin{equation*}
 I-\varphi_1(-k_nA)=\mathcal{O}(k_n),\quad n=1,\ldots, N.
 \end{equation*}
 For $ t \in I_n$,  the residual $R(t)$ also can be rewritten in the following form
\begin{equation}\label{residual of EMR}
R(t)=(\varphi_1(-k_nA)-I)(f(t^{n-\frac{1}{2}})-AU^{n-1})+f(t^{n-\frac{1}{2}})-f(t)+(t-t^{n-1})A\bar{\partial}U^n.
\end{equation}

Let the error $e:=u-U$. According to
\begin{equation*}\left\{
\begin{array}{l}
u'(t)+Au(t)=f(t),\cr\noalign{\vskip4truemm}
{U}^{\prime}(t)+AU(t)=f(t)+R(t), 
\end{array}
\right.
\end{equation*}
then the error $e(t)$ satisfies
\begin{equation}\label{error of U}
e^{\prime}(t)+Ae(t)=-R(t).
\end{equation}
Taking in \eqref{error of U} the inner product with $e(t)$ leads to
\begin{equation}
(e^{\prime}(t),e(t))+(Ae(t),e(t))=-(R(t),e(t)),
\end{equation}
then
\begin{equation}\label{eq4.4}
\frac{1}{2}\frac{d}{dt}|e(t)|^2+\|e(t)\|^2=-(R(t),e(t)).
\end{equation}
\subsection{Maximal norm estimate} Combining the Poincar\'{e} inequality with  the Cauchy-Schwarz inequality, from (\ref{eq4.4}) we have
\begin{equation*}
\frac{1}{2}\frac{d}{dt}|e(t)|^2+\lambda_1|e(t)|^2 \leq \frac{\lambda_1}{2}|e(t)|^2+\frac{1}{2\lambda_1}|R(t)|^2.
\end{equation*}
It is easy to obtain that
\begin{equation}\label{u estimate}
\frac{d}{dt}(e^{\lambda_1t}|e(t)|^2)\leq\frac{e^{\lambda_1t}}{\lambda_1}|R(t)|^2.
\end{equation}
Integrating \eqref{u estimate} from $0$ to $t\in(0,T]$ yields
\begin{equation}
\begin{aligned}
|e(t)|^2&\leq e^{-\lambda_1t}|e(0)|^2+e^{-\lambda_1t}\int_{0}^t \frac{e^{\lambda_1s}}{\lambda_1}|R(s)|^2ds\cr\noalign{\vskip4truemm}
&\leq e^{-\lambda_1t}|e(0)|^2+\frac{1-e^{-\lambda_1t}}{\lambda_1^2}\max\limits_{s\in[0,T]}|R(s)|^2.
\end{aligned}
\end{equation}
Hence, we have
\begin{equation}
\max\limits_{s\in[0,T]} |e(s)|^2 \leq |e(0)|^2+\frac{1}{\lambda_1^2}\max\limits_{s\in[0,T]}|R(s)|^2.
\end{equation}
This indicates that the error of  $u(t)-U(t)$  is  bounded by the initial error $|e(0)|$ and the residual $R(s), \ s\in[0,T]$. Under the assumption of  $U^0=u^0$, we deduce that
\begin{equation}
\max\limits_{s\in[0,T]} |e(s)|^2 \leq \frac{1}{\lambda_1^2} \max\limits_{s\in[0,T]} |R(s)|^2.
\end{equation}

\subsection{${L^2(0,t;V)}$-estimate} We have presented  the error estimate with maximal norm in the previous subsection,  the following theorem will show the error estimate in the $L^2(0,t;V)$-norm.
\begin{theorem}
Let $u(t)$ be the exact solution of \eqref{linear problem}, and  $U(t)$ is the exponential midpoint approximation  to $u(t)$ defined by \eqref{linear interpolation}. Denoting $R(t)$ as the residual of $U(t)$, then the error
$e(t)=u(t)-U(t)$ satisfies
\begin{equation}
|e(t)|^2+\int_0^t \|e(s)\|^2ds\leq \int_0^t \|R(s)\|_{\star}^2ds.
\end{equation}
\end{theorem}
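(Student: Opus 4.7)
The plan is to start from the identity already derived in the text, namely
\[
\tfrac{1}{2}\tfrac{d}{dt}|e(t)|^2+\|e(t)\|^2=-(R(t),e(t)),
\]
which follows from taking the $H$-inner product of the error equation $e'(t)+Ae(t)=-R(t)$ with $e(t)$. The key difference from the maximal-norm estimate of the previous subsection is that here I want to retain the full coercivity term $\|e(t)\|^2$ on the left, rather than sacrificing it to the Poincar\'e inequality.

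The main step is to bound the right-hand side using the duality pairing between $V^\star$ and $V$. By the very definition of the dual norm,
\[
|(R(t),e(t))|\leq \|R(t)\|_{\star}\,\|e(t)\|,
\]
and then Young's inequality with parameter $1$ gives
\[
\|R(t)\|_{\star}\,\|e(t)\|\leq \tfrac{1}{2}\|R(t)\|_{\star}^{2}+\tfrac{1}{2}\|e(t)\|^{2}.
\]
Substituting this back and absorbing the $\tfrac{1}{2}\|e(t)\|^2$ into the left-hand side yields
\[
\tfrac{d}{dt}|e(t)|^2+\|e(t)\|^2\leq \|R(t)\|_{\star}^{2}.
\]

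Finally, I integrate this differential inequality over $(0,t)$. Since the scheme is initialized with $U^0=u^0$, the initial error satisfies $e(0)=0$, so $|e(0)|^2$ drops out and one obtains exactly
\[
|e(t)|^2+\int_0^t\|e(s)\|^2\,ds\leq \int_0^t\|R(s)\|_{\star}^{2}\,ds,
\]
which is the claimed bound. I do not anticipate any real obstacle: the only subtlety is the choice of splitting constant in Young's inequality, which must match the coefficient of $\|e(t)\|^2$ exactly so that the gradient term survives on the left with coefficient $1$; any other weighting would force an extra multiplicative constant into the final estimate and spoil the clean form of the theorem.
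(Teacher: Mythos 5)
Your argument is correct and coincides with the paper's own proof: both start from the energy identity \eqref{eq4.4}, apply the duality bound $|(R,e)|\leq\|R\|_{\star}\|e\|$ together with Young's inequality with the balanced splitting, and integrate using $e(0)=0$. The paper states these steps more tersely, but there is no substantive difference.
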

\begin{proof}
In view of \eqref{eq4.4}, using the Cauchy-Schwarz inequality and the Young inequality,   we get
\begin{equation*}
\frac{d}{dt}
|e(t)|^2+\|e(t)\|^2\leq\|R(t)\|_{\star}^2.
\end{equation*}
Under the assumption of $U^0=u^0$, we integrate the above inequality from $0$ to $t \leq T$ and obtain
\begin{equation*}
|e(t)|^2+\int_0^t \|e(s)\|^2ds \leq \int_0^t \|R(s)\|_{\star}^2ds.
\end{equation*}
The proof is completed.
\end{proof}

These results reveal that the error $e(t)=u(t)-U(t)$ can be bounded by the residual $|R(t)|$. It is obvious that  the a posteriori quantity $R(t)$  has order one. In fact, when we apply  the exponential midpoint rule to a scalar ordinary differential equation (o.d.e.)  $u^{\prime}(t)=f(t)$, the exponential midpoint rule reduces to the mid-rectangle formula, i.e.,
\begin{equation*}
R(t)=f(t^{n-\frac{1}{2}})-f(t).
\end{equation*}
In this case, obviously, $R(t)$ has order one.  However, the exponential midpoint rule has second-order accuracy, and the residual $R(t)$ is the suboptimal order.  Applying the energy techniques to the error equation leads to the suboptimal bounds.

\section{Exponential midpoint  reconstruction}\label{sec4}
Based on the  property of the exponential function,  a continuous piecewise quadratic polynomial in time $\hat{U}:[0,T]\rightarrow H$ is defined as follows, which 
provides the second-order residual. To begin with, we denote $\psi(t)$ as the linear interpolation of $\varphi_1(-k_nA)f(t)$ at the nodes $t^{n-1}$ and $t^{n-\frac{1}{2}}$,
\begin{equation}\label{new interpolation of f}
\psi(t):=\varphi_1(-k_nA)f(t^{n-\frac{1}{2}})+\frac{2}{k_n}(t-t^{n-\frac{1}{2}})\varphi_1(-k_nA)\big[f(t^{n-\frac{1}{2}})-f(t^{n-1})\big], \quad t \in I_n,
\end{equation}
and introduce  a piecewise quadratic polynomial $\Psi(t)=\int_{t^{n-1}}^{t} \psi(s)ds,\ t \in I_n,$ i.e.,
\begin{equation}
\Psi(t)=(t-t^{n-1})\varphi_1(-k_nA)f(t^{n-\frac{1}{2}})-\frac{1}{k_n}(t-t^{n-1})(t^n-t)\varphi_1(-k_nA)\big[f(t^{n-\frac{1}{2}})-f(t^{n-1})\big],
\end{equation}
which satisfies that
\begin{equation}
\Psi(t^{n-1})=0,\ \Psi(t^n)=k_n\varphi_1(-k_nA)f(t^{n-\frac{1}{2}})=\int_{I_n} \varphi_1(-k_nA)f(t^{n-\frac{1}{2}})ds.
\end{equation}
For any $t \in I_n$, the exponential midpoint reconstruction $\hat{U}$ of $U$  is defined by
\begin{equation}\label{new reconstruction}
\begin{aligned}
\hat{U}(t)&=U^{n-1}-\int_{t^{n-1}}^t \varphi_1(-k_nA) AU(s)ds+\Psi(t)+(I-\varphi_1(-k_nA))\int_{t^{n-1}}^t U^{\prime}(s)ds\cr\noalign{\vskip4truemm}
&=U^{n-1}-\int_{t^{n-1}}^t \varphi_1(-k_nA)AU(s)ds+\Psi(t)+(I-\varphi_1(-k_nA))(U(t)-U^{n-1}).
\end{aligned}
\end{equation}
The above formula can be viewed as integrating $-\varphi_1(k_nA)AU(t)+\psi(t) +(I-\varphi_1(-k_nA))U^{\prime}(t)$ from $t^{n-1}$ to $t$. Obviously, 
the derivative of $\hat{U}(t)$  in time is 
\begin{equation}
\hat{U}^{\prime}(t)+\varphi_1(-k_nA)AU(t)=\psi(t)+(I-\varphi_1(-k_nA))U^{\prime}(t),   \quad \forall t \in I_n.
\end{equation}
In view of \eqref{new reconstruction}, we take the  following approximation to evaluate the integral
\begin{equation}\label{approximation1}
\begin{aligned}
\hat{U}(t)=&U^{n-1}-(t-t^{n-1})\varphi_1(-k_nA)A\big(c_1U^{n-1}+c_2U^{n}\big)+\Psi(t)\cr\noalign{\vskip4truemm}
&+[I-\varphi_1(-k_nA)](U(t)-U^{n-1}) \quad  \forall t \in I_n,
\end{aligned}
\end{equation}
where $c_1=-k_n^{-1}(\varphi_1(-k_nA)A)^{-1}[I-\varphi_1(-k_nA)-k_n\varphi_1(-k_nA)A]$, and
 $c_2=k_n^{-1}$ $\cdot(\varphi_1(-k_nA)A)^{-1}[I-\varphi_1(-k_nA)]$, then $\hat{U}$ is
\begin{equation}\label{approximation2}
\begin{aligned}
\hat{U}(t)&=U^{n-1}+\frac{(t-t^{n-1})}{k_n}\big[(I-\varphi_1(-k_nA)-k_n\varphi_1(-k_nA)A)U^{n-1}-(I\cr\noalign{\vskip4truemm}
&\quad -\varphi_1(-k_nA)) U^{n}\big]+\Psi(t)+(I-\varphi_1(-k_nA))(U(t)-U^{n-1})\cr\noalign{\vskip4truemm}
&=U^{n-1}+\frac{(t-t^{n-1})}{k_n}\big[(I-\varphi_1(-k_nA)-k_n\varphi_1(-k_nA)A)U^{n-1}-(I\cr\noalign{\vskip3truemm}
&\quad -\varphi_1(-k_nA)) U^{n}\big]+\Psi(t)+\frac{(t-t^{n-1})}{k_n}(I-\varphi_1(-k_nA))(U^{n}-U^{n-1})\cr\noalign{\vskip3truemm}
&=U^{n-1}-(t-t^{n-1})\varphi_1(-k_nA)AU^{n-1}+\Psi(t).
\end{aligned}
\end{equation}
In fact, the exponential midpoint rule  can be viewed as using the $k_n\varphi_1(-k_nA)AU^{n-1}$ to evaluate the
integral, which  is of first order. This allows us to evaluate the integral by using the first-order approximation, and it is easy to verify   that   if  we use \eqref{approximation1} to  evaluate the
integral, then the approximation has order one as well. Moreover, we notice that $\hat{U}(t^{n-1})=U^{n-1}$  and
\begin{equation}
\hat{U}(t^n)=U^{n-1}-k_n\varphi_1(-k_nA)AU^{n-1}+k_n\varphi_1(-k_nA)f(t^{n-\frac{1}{2}})=U^n.
\end{equation}
Therefore, $\hat{U}$ and $U$ coincide at the nodes $t^0,\cdots,t^{N}$; especially, $\hat{U}:[0,T]\rightarrow H$ is
continuous.

In view of \eqref{new interpolation of f} and \eqref{new reconstruction}, we have
\begin{equation}\label{the derivative}
\begin{aligned}
\hat{U}^{\prime}(t)+\varphi_1(-k_nA)AU(t)=&\varphi_1(-k_nA)\Big(f(t^{n-\frac{1}{2}})+\frac{2}{k_n}(t-t^{n-\frac{1}{2}})\big[f(t^{n-\frac{1}{2}})\cr\noalign{\vskip3truemm}
&-f(t^{n-1})\big]\Big)+(I-\varphi_1(-k_nA))U^{\prime}(t), \quad t\in I_n.
\end{aligned}
\end{equation}
Denoting  $\hat{R}(t)$ as the residual of $\hat{U}$, then
\begin{equation}
\hat{R}(t)=\hat{U}^{\prime}(t)+A\hat{U}(t)-f(t), \quad t \in I_n.
\end{equation}
Using the formula \eqref{the derivative} leads to
\begin{equation*}
\begin{aligned}
\hat{R}(t)=&A\hat{U}(t)-\varphi_1(-k_nA)AU(t)+\varphi_1(-k_nA)\Big(f(t^{n-\frac{1}{2}})+\frac{2}{k_n}(t-t^{n-\frac{1}{2}})\big[f(t^{n-\frac{1}{2}})\cr\noalign{\vskip4truemm}
&-f(t^{n-1})\big]\Big)-f(t)+(I-\varphi_1(-k_nA))U^{\prime}(t), \quad t \in I_n.
\end{aligned}
\end{equation*}
For $t \in I_n$, the residual $\hat{R}(t)$ also can be rewritten as
\begin{equation*}
\begin{aligned}
\hat{R}(t)&=A(\hat{U}(t)-U(t))+(I-\varphi_1(-k_nA))AU(t)+(I-\varphi_1(-k_nA))U^{\prime}(t)\cr\noalign{\vskip3truemm}
&\quad +\varphi_1(-k_nA)\Big(f(t^{n-\frac{1}{2}})+\frac{2}{k_n}(t-t^{n-\frac{1}{2}})\big[f(t^{n-\frac{1}{2}})-f(t^{n-1})\big]\Big)-f(t)\cr\noalign{\vskip3truemm}
&=A(\hat{U}(t)-U(t))+(I-\varphi_1(-k_nA))\big(U^{\prime}(t)+AU(t)\big)-(I-\varphi_1(-k_nA))f(t)\cr\noalign{\vskip3truemm}
&\quad +\varphi_1(-k_nA)\Big(f(t^{n-\frac{1}{2}})+\frac{2}{k_n}(t-t^{n-\frac{1}{2}})\big[f(t^{n-\frac{1}{2}})-f(t^{n-1})\big]-f(t)\Big)\cr\noalign{\vskip3truemm}
&=A(\hat{U}(t)-U(t))+(I-\varphi_1(-k_nA))\big(U^{\prime}(t)+AU(t)-f(t)\big)\cr\noalign{\vskip3truemm}
&\quad +\varphi_1(-k_nA)\Big(f(t^{n-\frac{1}{2}}) +\frac{2}{k_n}(t-t^{n-\frac{1}{2}})\big[f(t^{n-\frac{1}{2}})-f(t^{n-1})\big]-f(t)\Big).
\end{aligned}
\end{equation*}

In what follows, it will be proved that  the  a posteriori quantity $\hat{R}(t)$ has order two; compare with \eqref{residual of EMR}.

\section{Optimal order a posteriori error estimates for linear problems} \label{sec5}
In this section, using the energy techniques, we derive  optimal order a  posteriori error bounds for the exponential midpoint reconstruction \eqref{new reconstruction}. Let the errors $e$ and $\hat{e}$ as $e:=u-U$ and $\hat{e}:=u-\hat{U}$,  $u$ and $\hat{U}$ satisfy that
\begin{equation}\left\{
\begin{array}{l}
u'(t)+Au(t)=f(t),\cr\noalign{\vskip2truemm}
\hat{U}^{\prime}(t)+\varphi_1(-k_nA)AU(t)=\psi(t)+(I-\varphi_1(-k_nA)) U^{\prime}(t),
\end{array}
\right.
\end{equation}
then  we obtain
\begin{equation}\label{error2}
\begin{aligned}
\hat{e}^{\prime}(t) +Ae(t)&=f(t)-\psi(t)-(I-\varphi_1(-k_nA))(U^{\prime}(t)+AU(t))\cr\noalign{\vskip2truemm}
&=\varphi_1(-k_nA)R_f(t)+(\varphi_1(-k_nA)-I)R(t),
\end{aligned}
\end{equation}
where $R_f(t)$ and $R(t)$ are defined by
\begin{equation}\label{estimate1}
R_f(t)=f(t)-\Big\{f(t^{n-\frac{1}{2}})+\frac{2}{k_n}(t-t^{n-\frac{1}{2}})\big[f(t^{n-\frac{1}{2}})-f(t^{n-1})\big]\Big\}, \quad t\in I_n,
\end{equation}
and
\begin{equation}\label{estimate2}
R(t)=U^{\prime}(t)+AU(t)-f(t), \quad t\in I_n.
\end{equation}

Taking in \eqref{error2} the inner product with $\hat{e}(t)$ leads to
\begin{equation*}
(\hat{e}^{\prime}(t),\hat{e}(t))+(Ae(t),\hat{e}(t))=(\varphi_1(-k_nA)R_f(t),\hat{e}(t))+((\varphi_1(-k_nA)-I)R(t),\hat
{e}(t)),
\end{equation*}
combining
\begin{equation}
(Ae(t),\hat{e}(t))=\frac{1}{2}\big(\|e(t)\|^2+\|\hat{e}(t)\|^2-\|\hat{e}(t)-e(t)\|^2\big)
\end{equation}
with
\begin{equation}
\hat{e}(t)-e(t)=U(t)-\hat{U}(t),
\end{equation}
we deduce that
\begin{equation*}
\begin{aligned}
\frac{d}{dt}|\hat{e}(t)|^2+\|e(t)\|^2+\|\hat{e}(t)\|^2=&\|\hat{U}(t)-U(t)\|^2+2(\varphi_1(-k_nA)R_f(t),\hat{e}(t))\cr\noalign{\vskip4truemm}
&+2((\varphi_1(-k_nA)-I)R(t),\hat{e}(t)).
\end{aligned}
\end{equation*}

\subsection{${L^2(0,T;V)}$-estimate}
We show the $L^2(0,T;V)$-norm a posteriori error estimate for the exponential midpoint rule by the following theorem.
\begin{theorem}
Let $u(t)$ be the exact solution of \eqref{linear problem}, $U(t)$ is the exponential midpoint approximation to $u(t)$ defined by \eqref{linear interpolation},   $\hat{U}$ is the corresponding reconstruction of
$U$ defined by \eqref{new reconstruction}, and denote $e=u-U$, $\hat{e}=u-\hat{U}$ respectively, then a posteriori error estimate holds for $t\in I_n$:
\begin{equation}\label{reconstruction estimate1}
\begin{aligned}
\frac{2}{5}\int_0^t&\|\hat{U}(s)-U(s)\|^2ds\leq |\hat{e}(t)|^2+\int_0^t \Big(\|e(s)\|^2+\frac{2}{3}\|\hat{e}(s)\|^2\Big)ds \cr\noalign{\vskip4truemm}
\leq& \int_0^t\|\hat{U}(s)-U(s)\|^2ds\\
& + 6\sum\limits_{i=1}^{n-1}\int_{t^{i-1}}^{t^{i}} \Big(\|\varphi_1(-k_iA)R_f(s)\|_{\star}^2+\|(\varphi_1(-k_iA)-I)R(s)\|_{\star}^2\Big)ds\\
&+6\int_{t^{n-1}}^{t} \Big(\|\varphi_1(-k_nA)R_f(s)\|_{\star}^2 +\|(\varphi_1(-k_nA)-I)R(s)\|_{\star}^2\Big)ds,
\end{aligned}
\end{equation}
where $R_f(s)$ and $R(s)$ are defined by \eqref{estimate1} and \eqref{estimate2}, respectively.
\end{theorem}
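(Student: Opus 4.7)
The plan is to start from the pointwise identity already derived just above the theorem,
\[
\frac{d}{dt}|\hat{e}(t)|^2+\|e(t)\|^2+\|\hat{e}(t)\|^2
=\|\hat{U}(t)-U(t)\|^2+2(\varphi_1(-k_nA)R_f(t),\hat{e}(t))+2((\varphi_1(-k_nA)-I)R(t),\hat{e}(t)), \quad t\in I_n,
\]
and then separately obtain the upper and lower bounds by two short energy arguments. I would also observe at the outset that $\hat{e}(0)=u^0-\hat{U}(0)=u^0-U^0=0$, so no initial contribution appears after integration.

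For the upper bound I would apply the Young inequality $2|(a,b)|\le \alpha^{-1}\|a\|_\star^2+\alpha\|b\|^2$ to each of the two duality pairings on the right-hand side with the calibrated choice $\alpha=\tfrac{1}{6}$. The two cross terms then contribute in total $6\big(\|\varphi_1(-k_nA)R_f\|_\star^2+\|(\varphi_1(-k_nA)-I)R\|_\star^2\big)+\tfrac{1}{3}\|\hat{e}\|^2$, and after absorbing the $\tfrac{1}{3}\|\hat{e}\|^2$ into the left-hand side I arrive at
\[
\frac{d}{dt}|\hat{e}|^2+\|e\|^2+\tfrac{2}{3}\|\hat{e}\|^2 \le \|\hat{U}-U\|^2+6\big(\|\varphi_1(-k_nA)R_f\|_\star^2+\|(\varphi_1(-k_nA)-I)R\|_\star^2\big)
\]
on $I_n$. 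Integrating this from $0$ to $t$ and splitting the time interval into $\bigcup_{i=1}^{n-1}I_i\cup(t^{n-1},t]$ (because the operator coefficient $\varphi_1(-k_iA)$ is step dependent) reproduces the claimed upper bound with the factor $6$ in front of the residual contributions.

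For the lower bound I would avoid the identity altogether and instead use the elementary relation $\hat{U}-U=e-\hat{e}$. Expanding in $V$ and applying Young's inequality $2|(\hat{e},e)_V|\le \lambda\|\hat{e}\|^2+\lambda^{-1}\|e\|^2$ gives $\|\hat{U}-U\|^2\le(1+\lambda)\|\hat{e}\|^2+(1+\lambda^{-1})\|e\|^2$. The choice $\lambda=\tfrac{2}{3}$ is the unique value which balances both coefficients against the weights $\tfrac{2}{3}$ and $1$ appearing in the theorem, and it yields the sharp pointwise estimate $\tfrac{2}{5}\|\hat{U}-U\|^2\le \|e\|^2+\tfrac{2}{3}\|\hat{e}\|^2$. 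Integrating over $(0,t)$ and adding the non-negative quantity $|\hat{e}(t)|^2$ to the right-hand side completes the lower bound.

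I do not expect a serious obstacle in this proof: the analytic content is a single application of the energy identity plus two calibrated Young inequalities. The only subtlety is computing the two optimal constants, $\alpha=\tfrac{1}{6}$ for the upper bound (which simultaneously produces the factor $6$ on the residuals and leaves exactly $\tfrac{2}{3}\|\hat{e}\|^2$ on the left) and $\lambda=\tfrac{2}{3}$ for the lower bound (the exact saturation point of the triangle-type inequality), and taking care of the piecewise-constant dependence of $\varphi_1(-k_iA)$ when the time integral is broken up across the mesh.
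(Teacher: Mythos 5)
Your proposal is correct and follows essentially the same route as the paper: the paper bounds each duality pairing by $3\|\cdot\|_{\star}^2+\tfrac{1}{12}\|\hat{e}\|^2$ (which is exactly your Young inequality with $\alpha=\tfrac16$ once the factor $2$ in the identity is accounted for), absorbs the resulting $\tfrac13\|\hat e\|^2$ to leave $\tfrac23\|\hat e\|^2$, and integrates with the same interval splitting. For the lower bound the paper uses the triangle inequality $\|\hat U-U\|\le\|e\|+\|\hat e\|$ followed by the weighted estimate $\|\hat U-U\|^2\le\tfrac52\bigl(\|e\|^2+\tfrac23\|\hat e\|^2\bigr)$, which is algebraically identical to your expansion of $\|e-\hat e\|^2$ with $\lambda=\tfrac23$.
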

\begin{proof}
Using the following inequalities
\begin{equation}
\begin{aligned}
((\varphi_1(-k_nA)-I)R(t),\hat{e}(t))&\leq 3\|(I-\varphi_1(-k_nA))R(t)\|_{\star}^2+\frac{1}{12}\|\hat{e}(t)\|^2,\\
(\varphi_1(-k_nA)R_f(t),\hat{e}(t))&\leq 3\|\varphi_1(-k_nA)R_f(t)\|_{\star}^2+\frac{1}{12}\|\hat{e}(t)\|^2,
\end{aligned}
\end{equation}
we have
\begin{equation}
\begin{aligned}
\frac{d}{dt}|\hat{e}(t)|^2+\|e(t)\|^2+\frac{2}{3}\|\hat{e}(t)\|^2\leq&\|\hat{U}(t)-U(t)\|^2+6\Big(\|\varphi_1(-k_nA)R_f(t)\|_{\star}^2\cr\noalign{\vskip3truemm}
&+\|(\varphi_1(-k_nA)-I)R(t)\|_{\star}^2\Big).
\end{aligned}
\end{equation}
Integrating the above inequality from $0$ to $t\in I_{n}$ leads to
\begin{equation}
\begin{aligned}
|\hat{e}(t)|^2&+\int_0^t \big(\|e(s)\|^2+\frac{2}{3}\|\hat{e}(s)\|^2\big)ds \cr\noalign{\vskip3truemm}
\leq& \int_0^t\|\hat{U}(s)-U(s)\|^2ds + 6\sum\limits_{i=1}^{n-1}\int_{t^{i-1}}^{t^{i}} \Big(\|\varphi_1(-k_iA)R_f(s)\|_{\star}^2 \cr\noalign{\vskip3truemm}
& +\|(\varphi_1(-k_iA)-I)R(s)\|_{\star}^2\Big)ds\\
&+ 6\int_{t^{n-1}}^{t}\Big(\|\varphi_1(-k_nA)R_f(s)\|_{\star}^2 +\|(\varphi_1(-k_nA)-I)R(s)\|_{\star}^2\Big)ds.
\end{aligned}
\end{equation}

 Next the lower bound will be analyzed. Using the triangle inequality
\begin{equation*}
\|\hat{U}(s)-U(s)\|\leq \|e(s)\|+\|\hat{e}(s)\|,
\end{equation*}
we get
\begin{equation}\label{inequality01}
\|\hat{U}(s)-U(s)\|^2\leq\frac{5}{2}\big(\|e(s)\|^2+\frac{2}{3}\|\hat{e}(s)\|^2\big).
\end{equation}
Likewise, we integrate \eqref{inequality01} from $0$ to $t$ and obtain the desired lower bound
\begin{equation}
\frac{2}{5}\int_0^t \|\hat{U}(s)-U(s)\|^2ds \leq \int_0^t \big(\|e(s)\|^2+\frac{2}{3}\|\hat{e}(s)\|^2\big)ds.
\end{equation}
The proof is completed.
\end{proof}
\subsection{${L^{\infty}(0,t;H)}$-estimate}
From  \eqref{reconstruction estimate1}, the $L^{\infty}(0,t;H)$-estimate is
 shown by: for $\forall t\in I_n$,
\begin{equation}\label{max-error of recon}
\begin{aligned}
 \max\limits_{0\leq s\leq t} |\hat{e}(s)|^2 \leq& \int_0^t\|\hat{U}(s)-U(s)\|^2ds + 6\sum\limits_{i=1}^{n-1}\int_{t^{i-1}}^{t^{i}} \Big(\|\varphi_1(-k_iA)R_f(s)\|_{\star}^2\cr\noalign{\vskip3truemm}
 &+\|(\varphi_1(-k_iA)-I)R(s)\|_{\star}^2\Big)ds+6\int_{t^{n-1}}^{t} \Big(\|\varphi_1(-k_nA)R_f(s)\|_{\star}^2\cr\noalign{\vskip3truemm}
 &+\|(\varphi_1(-k_nA)-I)R(s)\|_{\star}^2\Big)ds.
 \end{aligned}
\end{equation}
If we consider $\hat{U}(t)$ and $U(t)$ coincide at the nodes $t^0,\cdots,t^{N}$, then the estimate is valid
\begin{equation}
\begin{aligned}
 \max\limits_{0\leq i\leq N} |{e}(t^{i})|^2 \leq& \int_0^{T} \|\hat{U}(s)-U(s)\|^2ds + 6\sum\limits_{n=1}^{N}\int_{t^{n-1}}^{t^n} \Big(\|\varphi_1(-k_nA)R_f(s)\|_{\star}^2\cr\noalign{\vskip3truemm}
 &+\|(\varphi_1(-k_nA)-I)R(s)\|_{\star}^2\Big)ds.
 \end{aligned}
\end{equation}

\subsection{Estimate of ${\hat{U}-U}$} In this subsection, we will estimate  and   derive the representation of $\hat{U}-U$ for the exponential midpoint rule.
Subtracting \eqref{linear interpolation} from \eqref{approximation2}, we get
\begin{equation}\label{posteriori quantity1}
\begin{aligned}
\hat{U}(t)-U(t)&=U^{n-1}-(t-t^{n-1})\varphi_1(-k_nA)AU^{n-1}+\Psi(t)-U(t)\cr\noalign{\vskip3truemm}
&=-(t-t^{n-1})\varphi_1(-k_nA)AU^{n-1}+\Psi(t)-(t-t^{n-1})\frac{U^{n}-U^{n-1}}{k_n}\cr\noalign{\vskip3truemm}
&=-\frac{1}{k_n}(t-t^{n-1})(t^n-t)\varphi_1(-k_nA)\big[f(t^{n-\frac{1}{2}})-f(t^{n-1})\big],
\end{aligned}
\end{equation}
for $t\in I_n$, which means that $\max_{t\in I_n} |\hat{U}(t)-U(t)|=\mathcal{O}(k_n^2)$. As we denote $\varepsilon_U:=\int_0^T \|\hat{U}(t)-U(t)\|^2dt$, the estimator is
\begin{equation}\label{posteriori quantity2}
\begin{aligned}
\varepsilon_U&=\int_0^T \|\hat{U}(t)-U(t)\|^2dt=\sum\limits_{n=1}^N \int_{I_n} \|\hat{U}(t)-U(t)\|^2dt\cr\noalign{\vskip3truemm}
&=\sum\limits_{n=1}^N \int_{I_n} (t-t^{n-1})^2(t^n-t)^2\Big\|\frac{1}{k_n}\varphi_1(-k_nA)\big[f(t^{n-\frac{1}{2}})-f(t^{n-1})\big]\Big\|^2dt\cr\noalign{\vskip3truemm}
&=\frac{1}{30}\sum\limits_{n=1}^N k_n^5\Big\|\frac{1}{k_n}\varphi_1(-k_nA)\Big[f(t^{n-\frac{1}{2}})-f(t^{n-1})\big]\Big\|^2.
\end{aligned}
\end{equation}
We make some remarks for this section:
\begin{enumerate}[(i)]
\item The order of the a posteriori quantities $\varphi_1(-k_nA)R_f(t)$ and $(\varphi_1(-k_nA)-I)R(t)$ will be discussed.   The entire
     functions $\varphi_k(-k_nA)$ are bounded  (see \cite{Hochbruck2010}),   then a posteriori quantities  $\varphi_1(-k_nA)R_f(t)$  and $\hat{U}(t)-U(t)$ are of second order once $f\in C^2(0,T;H)$.  As we presented in
    Section \ref{sec3}, the posteriori quantity $R(t)$ is of first order and $\varphi_1(-k_nA)-I=\mathcal{O}(k_n)$,
   whence a posteriori quantity  $(\varphi_1(-k_nA)-I)R(t)$  has order two.
     However the order of the quantities  $\varphi_1(-k_nA)R_f(t)$  and $\hat{U}(t)-U(t)$  may be reduced due to the practical computation of the entire function $\varphi_1(-k_nA)$.  We also notice that $I-\varphi_1(-k_nA)=\mathcal{O}(k_n)$ is a truncation of the series in \eqref{series of phi}, and this truncation usually  affects the order of $(\varphi_1(-k_nA)-I)R(t)$. It is true that the order of the exponential midpoint method  may reduce in $L^2(0,t;V)$-norm, therefore it is acceptable that the  $\varphi_1(-k_nA)$ affects the orders of the a posteriori quantities  $\hat{U}(t)-U(t)$, $\varphi_1(-k_nA)R_f(t)$ and $(\varphi_1(-k_nA)-I)R(t)$ via the $L^2(0,t;V)$-norm and $L^2(0,t;V^{\star})$-norm, respectively.    The theoretical results will be illustrated by several numerical examples.
\item   We have presented a posteriori error  estimates  for the exponential midpoint rule (\ref{exponentail M}) by using the reconstruction $\hat{U}$  in the  $L^2(0,t;V)$- and $L^{\infty}(0,t;H)$-norms. From \eqref{reconstruction estimate1} and
\eqref{max-error of recon}, it is very clear that the error upper bounds are dependent on $\hat{U}(t)-U(t)$, $\varphi_1(-k_nA)R_f(t)$ and $(\varphi_1(-k_nA)-I)R(t)$ via the $L^2(0,t;V)$-norm and $L^2(0,t;V^{\star})$-norm, respectively,  which are discretization parameters, the data of problem and the approximation of $\varphi_1(-k_nA)$. The a  posteriori quantities $\varphi_1(-k_nA)R_f(t)$ and $(\varphi_1(-k_nA)-I)R(t)$ are of the optimal second order in theory, and the
formula \eqref{posteriori quantity1} reveals that a posteriori quantity $U(t)-\hat{U}(t)$  has order two, whence  a posteriori estimate is of optimal (second) order.
    \end{enumerate}

    \section{Error estimates for semilinear parabolic differential equation}\label{sec6}
    In this section,  our objective is the derivation of a posteriori error estimates of the  exponential midpoint method for solving semilinear parabolic differential equations.
    Suppose that $B(t,\cdot)$ is an operator from $V$ to $V^{\star}$, and  the locally Lipschitz-continuous conditions for $B(t,\cdot)$  in a strip along the exact solution $u$ holds:
    \begin{equation}\label{assum1}
    \|B(t,v)-B(t,w)\|_{\star}\leq L\|v-w\|, \quad \forall v, w \in T_u,
    \end{equation}
where $T_u:=\{v\in V: \min_t\|u(t)-v\|\leq1\}$, and a constant $L$. For \eqref{nonlinear problem},  we assume that the following local one-sided
 Lipschitz condition holds:
    \begin{equation}\label{assum2}
 (B(t,v)-B(t,w),v-w)\leq \lambda\|v-w\|^2+\mu|v-w|^2, \quad \forall v, w \in T_u,
    \end{equation}
 around the solution $u$, uniformly in $t$, with a constant $\lambda$
 less than one and a constant $\mu$.

    \subsection{Exponential midpoint method}
 It has been shown that the ERK methods  converge at least
with their stage order, and that convergence of higher order (up to the classical order) occurs, if the problem has sufficient temporal and spatial smoothness. Here, the exponential midpoint method    \eqref{IMERK12}  satisfies the conditions \eqref{additional conditions}, then it is of second order.

   Since we use the nodal values $U^{n-1}$ and $U^{n}$ to express $U^{n-\frac{1}{2}}$,
then 
  \begin{equation}\label{internal stages}
  \begin{aligned}
  U^{n-\frac{1}{2}}&=e^{-\frac{1}{2}k_nA}U^{n-1}+\frac{k_n}{2}\varphi_1\big(-\frac{1}{2}k_nA\big)\varphi_1(-k_nA)^{-1}\frac{U^n-e^{-k_nA}U^{n-1}}{k_n}\cr\noalign{\vskip3truemm}
&=e^{-\frac{1}{2}k_nA}U^{n-1}+\frac{1}{2}\varphi_1\big(-\frac{1}{2}k_nA\big)\varphi_1(-k_nA)^{-1}(U^n-e^{-k_nA}U^{n-1}).
  \end{aligned}
  \end{equation}
Inserting \eqref{internal stages} into the update of the exponential midpoint method leads to
  \begin{equation}
  \begin{aligned}
  U^n&=e^{-k_nA}U^{n-1}+k_n\varphi_1(-k_nA)B(t^{n-\frac{1}{2}},U^{n-\frac{1}{2}})\cr\noalign{\vskip3truemm}
  &=U^{n-1}+k_n\varphi_1(-k_nA)\big(B(t^{n-\frac{1}{2}},U^{n-\frac{1}{2}})-AU^{n-1}\big).
  \end{aligned}
  \end{equation}
   We define the  exponential midpoint approximation $U: [0,T]\rightarrow D(A)$ to $u(t)$ by a  continuous piecewise linear polynomial of nodal values $U^{n-1}$ and $U^n$,
  \begin{equation}\label{EMM-L}
  U(t)=U^{n-1}+(t-t^{n-1})\bar{\partial}U^n, \quad t\in I_n.
  \end{equation}
  The residual $R(t) \in H$ is
  \begin{equation}
  \begin{aligned}
  R(t)&=U^{\prime}(t)+AU(t)-B(t,U(t))\cr\noalign{\vskip3truemm}
  &=(I-\varphi_1(-k_nA))AU^{n-1}+(t-t^{n-1})A\bar{\partial}U^n\cr\noalign{\vskip3truemm}
  &\qquad \quad +\varphi_1(-k_nA)B(t^{n-\frac{1}{2}},U^{n-\frac{1}{2}})-B(t,U(t)), \quad t\in I_n.
  \end{aligned}
  \end{equation}

   It is easy to verify that a posteriori quantity $R(t)$ is of first order and suboptimal order. Hence, similar to the linear case, the reconstruction $\hat{U}$ of $U$ is needed to recover the optimal order.

\subsection{Reconstruction}
 Before  we derive the optimal order a posteriori error estimates,  we need to define a continuous piecewise quadratic polynomial in time $\bar{U}: [0,T]\rightarrow H$ at nodal values $U^{n-1}$, $U^{n-\frac{1}{2}}$ and $U^{n}$. The aim of introducing $\bar{U}$ is simply to better analyze  the reconstruction $\hat{U}$ of $U$. Let $b:I_n\rightarrow H$ be the linear interpolation of $\varphi_1(-k_nA)B(\cdot,\bar{U}(\cdot))$ at the  nodes  $t^{n-1}$ and $t^{n-\frac{1}{2}}$,
 \begin{equation*}
 \begin{aligned}
 b(t)=&\varphi_1(-k_nA)B(t^{n-\frac{1}{2}},U^{n-\frac{1}{2}})\cr\noalign{\vskip4truemm}
 &+\frac{2}{k_n}(t-t^{n-\frac{1}{2}})\varphi_1(-k_nA)
 \big[B(t^{n-\frac{1}{2}},U^{n-\frac{1}{2}})-B(t^{n-1},U^{n-1})\big], \quad t\in I_n.
 \end{aligned}
 \end{equation*}
Recalling  \eqref{new reconstruction}, the
 exponential midpoint reconstruction  $\hat{U}$ of $U$  is defined by
 \begin{equation}\label{reconstruction ERK}
 \hat{U}(t):=U^{n-1}-\int_{t^{n-1}}^t \varphi_1(-k_nA)AU(s)ds +\int_{t^{n-1}}^t
 b(s)ds +\int_{t^{n-1}}^t (I-\varphi_1(-k_nA))U^{\prime}(s)ds,
 \end{equation}
 i.e.,
 \begin{equation}\label{reconstruction ERK appro}
 \begin{aligned}
 \hat{U}(t)=&U^{n-1}-(t-t^{n-1})\varphi_1(-k_nA)AU^{n-1}+(t-t^{n-1})\varphi_1(-k_nA)B(t^{n-\frac{1}{2}},U^{n-\frac{1}{2}})\cr\noalign{\vskip2truemm}
 &-\frac{1}{k_n}(t-t^{n-1})(t^n-t)\varphi_1(-k_nA)\big[B(t^{n-\frac{1}{2}},U^{n-\frac{1}{2}})-B(t^{n-1},U^{n-1})\big],
 \end{aligned}
 \end{equation}
for $t\in I_n$. It is obvious that $\hat{U}(t^{n-1})=U^{n-1}$ and
 \begin{equation*}
 \hat{U}(t^{n})=U^{n-1}-k_n\varphi_1(-k_nA)AU^{n-1}+k_n\varphi_1(-k_nA)B(t^{n-\frac{1}{2}},U^{n-\frac{1}{2}})=U^{n}.
 \end{equation*}
It follows  from \eqref{EMM-L} and  \eqref{reconstruction ERK appro} that
 \begin{equation*}
 \hat{U}(t)-U(t)=-(t-t^{n-1})(t^n-t)\frac{1}{k_n}\varphi_1(-k_nA)\big[B(t^{n-\frac{1}{2}},U^{n-\frac{1}{2}})-B(t^{n-1},U^{n-1})\big],
 \end{equation*}
 $t\in I_n$. In view of \eqref{reconstruction ERK}, we have
 \begin{equation}\label{residual1}
 \hat{U}^{\prime}(t)=-\varphi_1(-k_nA)AU(t)+\varphi_1(-k_nA)P_1B(t,\bar{U}(t))+(I-\varphi_1(-k_nA))U^{\prime}(t), \quad t\in I_n,
 \end{equation}
where $P_1B(t,\bar{U}(t))=B(t^{n-\frac{1}{2}},U^{n-\frac{1}{2}})+\frac{2}{k_n}(t-t^{n-\frac{1}{2}})
 [B(t^{n-\frac{1}{2}},U^{n-\frac{1}{2}})-B(t^{n-1},U^{n-1})]$.

Now, the \emph{residual} $\hat{R}(t)$ of $\hat{U}$ is
\begin{equation*}
\hat{R}(t)=\hat{U}^{\prime}(t)+A\hat{U}(t)-B(t,\hat{U}(t)), \quad  t\in I_n.
\end{equation*}
It immediately follows from \eqref{residual1} that
\begin{equation*}
\begin{aligned}
\hat{R}(t)=&A\hat{U}(t)-\varphi_1(-k_nA)AU(t)+\varphi_1(-k_nA)P_1B(t,\bar{U}(t))\cr\noalign{\vskip4truemm}
&+(I-\varphi_1(-k_nA))U^{\prime}(t) -B(t,\hat{U}(t)), \quad t\in I_n.
\end{aligned}
\end{equation*}
We can rewrite the residual $\hat{R}(t)$ as
\begin{equation}
\begin{aligned}
\hat{R}(t)&=A(\hat{U}(t)-U(t))+(I-\varphi_1(-k_nA))AU(t)+\varphi_1(-k_nA)P_1B(t,\bar{U}(t)) \cr\noalign{\vskip4truemm}
&\quad +(I-\varphi_1(-k_nA))U^{\prime}(t)-B(t,\hat{U}(t))\cr\noalign{\vskip4truemm}
&=A(\hat{U}(t)-U(t))+(I-\varphi_1(-k_nA))\big(U^{\prime}(t)+AU(t)-B(t,U(t))\big)\cr\noalign{\vskip4truemm}
&\quad +\varphi_1(-k_nA)\big(P_1B(t,\bar{U}(t))-B (t,U(t))\big)+B(t,U(t))-B(t,\hat{U}(t)),
\end{aligned}
\end{equation}
for $t\in I_n$, and a posteriori quantity $\hat{R}(t)$ has order two.

 \subsection{Error estimates} In this subsection, the a posteriori error bounds for the exponential midpoint method are derived. Let $e:=u-U$ and $\hat{e}:=u-\hat{U}$. We make the
 assumption that $\hat{U}(t)$, $U(t) \in T_{u}$ for all $t\in [0,T]$. Combining
 \eqref{nonlinear problem} and \eqref{residual1}, we give
\begin{equation*}\label{residual22}\left\{
\begin{array}{l}
\displaystyle
u'(t)+Au(t)=B(t,u(t)),\cr\noalign{\vskip4truemm}
\displaystyle \hat{U}^{\prime}(t)+\varphi_1(-k_nA)AU(t)=\varphi_1(-k_nA)P_1B(t,\bar{U}(t))+(I-\varphi_1(-k_nA)) U^{\prime}(t).
\end{array}
\right.
\end{equation*}
Therefore, the $\hat{e}(t)$ satisfies that 
\begin{equation}\label{residual33}
\begin{aligned}
\hat{e}^{\prime}(t)+Ae(t)=&B(t,u(t))-B(t,U(t))+(\varphi_1(-k_nA)-I)R(t)\cr\noalign{\vskip4truemm}
&+\varphi_1(-k_nA)\big(B(t,U(t))-P_1B(t,\bar{U}(t))\big),
\end{aligned}
\end{equation}
with
\begin{equation*}
R(t)=U^{\prime}(t)+AU(t)-B(t,U(t)), \quad t\in I_n.
\end{equation*}
Taking in \eqref{residual33} the inner product with $\hat{e}(t)$, and using that
\begin{equation*}
(Ae(t),\hat{e}(t))=\frac{1}{2}\big(\|e(t)\|^2+\|\hat{e}(t)\|^2-\|\hat{e}(t)-e(t)\|^2\big),
\end{equation*}
we have
\begin{equation}
\begin{aligned}
\frac{d}{dt}|\hat{e}(t)|^2+\|e(t)\|^2+\|&\hat{e}(t)\|^2=\|\hat{U}(t)-U(t)\|^2+2\big(B(t,u(t))-B(t,U(t)),\hat{e}(t)\big)\cr\noalign{\vskip4truemm}
&+2\big(\varphi_1(-k_nA)R_b(t),\hat{e}(t)\big)+2\big((\varphi_1(-k_nA)-I)R(t),\hat{e}(t)\big),
\end{aligned}
\end{equation}
with $R_b(t)=B(t,U(t))-P_1B(t,\bar{U}(t))$.
Under the assumptions of \eqref{assum1} and \eqref{assum2}, we respectively obtain
\begin{equation*}
\begin{aligned}
2(B(t,u(t))-B(t,U(t)),\hat{e}(t))\leq & \lambda(\|\hat{e}(t)\|^2+\|e(t)\|^2)+\mu(|\hat{e}(t)|^2+|e(t)|^2)\cr\noalign{\vskip4truemm}
&+L\big(\|\hat{e}(t)\|+\|e(t)\|\big)\|\hat{U}(t)-U(t)\|,
\end{aligned}
\end{equation*}
and
\begin{equation*}
L\big(\|\hat{e}(t)\|+\|e(t)\|\big)\|\hat{U}(t)-U(t)\|\leq 2\theta\big(\|\hat{e}(t)\|^2+\|e(t)\|^2\big)+\frac{L^2}{4\theta}\|\hat{U}(t)-U(t)\|^2. \end{equation*}
To sum up, we obtain
\begin{equation*}
\begin{aligned}
2(B(t,u(t))-B(t,U(t)),\hat{e}(t))\leq& (\lambda+2\theta)\big(\|\hat{e}(t)\|^2+\|e(t)\|^2\big)+3\mu|e(t)|^2 \cr\noalign{\vskip4truemm}
&+2\mu |\hat{U}(t)-U(t)|^2+\frac{L^2}{4\theta}\|\hat{U}(t)-U(t)\|^2
\end{aligned}
\end{equation*}
for any positive $\theta<\frac{1}{4}(1-\lambda)$.

Using  the Cauchy-Schwarz inequality leads to
\begin{equation*}
\begin{aligned}
\displaystyle \frac{d}{dt}|\hat{e}(t)|^2&-3\mu|\hat{e}(t)|^2+(1-\lambda-4\theta )\big(\|e(t)\|^2+\|\hat{e}(t)\|^2\big)\leq \big(1+\frac{L^2}{4\theta}\big)\|\hat{U}(t)-U(t)\|^2 \cr\noalign{\vskip4truemm} &+2\mu|\hat{U}(t)-U(t)|^2+\frac{1}{\theta}\|\varphi_1(-k_nA)R_b(t)\|_{\star}^2+\frac{1}{\theta}\|(\varphi_1(-k_nA)-I)R(t)\|_{\star}^2.
\end{aligned}
\end{equation*}
Under the condition that $\hat{e}(0)=0$, we  integrate the above  inequality from $0$ to $t \in I_n$:
\begin{equation}\label{semilinear upper}
\begin{aligned}
\displaystyle
|\hat{e}(t)|^2&+(1-\lambda-4\theta)\int_{0}^t e^{3\mu(t-s)}\big(\|e(s)\|^2+\|\hat{e}(s)\|^2\big)ds\cr\noalign{\vskip4truemm}
\displaystyle
\leq& \int_0^t e^{3\mu(t-s)}\Big[\big(1+\frac{L^2}{4\theta}\big)\|\hat{U}(s)-U(s)\|^2+2\mu|\hat{U}(s)-U(s)|^2\Big]ds\cr\noalign{\vskip4truemm} & +\sum\limits_{i=1}^{n-1}\int_{t^{i-1}}^{t^{i}} \frac{1}{\theta}\Big[\|\varphi_1(-k_iA)R_b(s)\|_{\star}^2+\|(\varphi_1(-k_iA)-I)R(s)\|_{\star}^2\Big]ds\\
& +\int_{t^{n-1}}^{t} \frac{1}{\theta}\Big[\|\varphi_1(-k_nA)R_b(s)\|_{\star}^2+\|(\varphi_1(-k_nA)-I)R(s)\|_{\star}^2\Big]ds.
\end{aligned}
\end{equation}
For the case of $\mu=0$, we have the a posteriori error estimate
\begin{equation}\label{semilinear upper2}
\begin{aligned}
\displaystyle
|\hat{e}(t)|^2&+(1-\lambda-4\theta)\int_{0}^t \big(\|e(s)\|^2+\|\hat{e}(s)\|^2\big)ds\cr\noalign{\vskip4truemm}
\displaystyle
\leq& \int_0^t \big(1+\frac{L^2}{4\theta}\big)\|\hat{U}(s)-U(s)\|^2ds +\sum\limits_{i=1}^{n-1}\int_{t^{i-1}}^{t^{i}} \frac{1}{\theta}\Big[\|\varphi_1(-k_iA)R_b(s)\|_{\star}^2 \cr\noalign{\vskip4truemm}
&+\|(\varphi_1(-k_iA)-I)R(s)\|_{\star}^2\Big]ds+\int_{t^{n-1}}^{t} \frac{1}{\theta}\Big[\|\varphi_1(-k_nA)R_b(s)\|_{\star}^2 \cr\noalign{\vskip4truemm}
&+\|(\varphi_1(-k_nA)-I)R(s)\|_{\star}^2\Big]ds.
\end{aligned}
\end{equation}
These imply that the error $e(t)$ can be bounded by the a posteriori quantities $\|\hat{U}(t)-U(t)\|^2$, $\|\varphi_1(-k_nA)R_b(t)\|_{\star}^2$ and $\|(\varphi_1(-k_nA)-I)R(t)\|_{\star}^2$. Furthermore,  assuming that $(1-\lambda-4\theta)>0$, then
\begin{equation}\label{nonlinear lower bound}
\frac{1-\lambda-4\theta}{2}\|\hat{U}(s)-U(s)\|^2\leq(1-\lambda-4\theta)\big(\|e(s)\|^2+\|\hat{e}(s)\|^2\big).
\end{equation}
Integrating the above inequality \eqref{nonlinear lower bound} from $0$ to $t$, we  can obtain the desired lower bound
\begin{equation}
\frac{1-\lambda-4\theta}{2} \int_0^t \|\hat{U}(s)-U(s)\|^2ds \leq (1-\lambda-4\theta) \int_0^t \big(\|e(s)\|^2+\|\hat{e}(s)\|^2\big)ds.
\end{equation}

\section{Numerical experiments}\label{sec7}
We have formulated  optimal order  a posteriori error estimates of the exponential midpoint method for linear  and semilinear parabolic equations. The theoretical results will be verified by the  numerical examples. Throughout the numerical experiments, the matrix-valued function
 $\varphi_1(-k_nA)$  is  evaluated by the  Krylov subspace method,
 which possesses the fast convergence. The details about Krylov subspace method can be
found in \cite{Hochbruck1997, Berland2007}.

 \begin{example}\label{example1}
In this example, we  consider the linear parabolic equation \cite{Hochbruck2005b}
\begin{equation}\label{problem1}
\begin{aligned}
u_t(x,t)&=u_{xx}(x,t)+f(x,t), \quad t\in [0,1], \quad x\in [0,1],\\
u(t,0)&=u(t,1)=0, \quad t \in [0,1]; \quad u(0,x)=x(1-x), \quad x\in [0,1],
\end{aligned}
\end{equation}
with the exact solution
$$u(x,t)=x(1-x)\exp(t).$$
 \end{example} 
 
Let $\Delta x=1/M$ and $x_i =i\Delta x$. Using the standard central finite difference to approximate the
space derivative $u_{xx}$ leads to
\begin{equation}
\begin{aligned}
u^{\prime}_i(t)&=\Delta x^{-2}[u_{i-1}(t)-2u_{i}(t)+u_{i+1}(t)]+f_{i}(t), \quad t>0,\\
u_i(t)&=i\Delta x(1-i\Delta x), \quad i=1,2,\ldots,M-1, \quad t=0,\\
u_{0}(t)&=u_{M}(t)=0, \quad t\geq0,
\end{aligned}
\end{equation}
where $u_i(t) \approx u(t,x_i)$, and $f_i$ stands for $f$ at $(t,x_i)$. 

The discrete maximum norm and the $L^2(0,T;V)$-norm in time of the error $e(t)$ are defined by:
\begin{equation*}
E_{\infty}:=\max\limits_{1\leq n\leq N} \Big(\sum\limits_{i=1}^M \Delta x |e_{i}^{n}|^2\Big)^{\frac{1}{2}}, \qquad  E_1:=\Big(\int_0^T \|e(s)\|^2ds\Big)^{\frac{1}{2}},
\end{equation*}
and the error at time $T$, $E_T=|e(T)|=|e(t^N)|.$
We  introduce the a posteriori quantities $\mathcal{E}_f$ and  $\zeta_{U}$  defined as
\begin{equation*}
\mathcal{E}_f:=\Big(\sum\limits_{n=1}^{N}\int_{t^{n-1}}^{t^n}\|\varphi_1(-k_nA)R_f(s)\|_{\star}^2ds\Big)^{\frac{1}{2}},
\end{equation*}
and
\begin{equation*}
\zeta_{U}=\Big(\sum\limits_{n=1}^{N}\int_{t^{n-1}}^{t^n}\|(\varphi_1(-k_nA)-I)R(s)\|_{\star}^2ds\Big)^{\frac{1}{2}}.
\end{equation*}
Taking $\mathcal{E}_U$ as the square root of $\varepsilon_U$ defined in \eqref{posteriori quantity2}, and we define  the effectivity indices
 $ei_L$ and $ei_U$   as
 \begin{equation*}
 ei_L:=\frac{lower \ estimator}{\frac{5}{3}Err_1},\qquad ei_U:=\frac{upper \ estimator}{Err_T+\frac{5}{3}Err_1},
 \end{equation*}
with $lower \ estimator:= 2\mathcal{E}_U/5$ and $upper \  estimator:= \mathcal{E}_U+6\mathcal{E}_f+6\zeta_{U}$, respectively. 
In what follows, all the integrals  from $t^{n-1}$ to $t^n$ are approximated by the Gauss-Legendre quadrature formula with three nodes which can exactly integrate polynomials of degree at most $4$.

Taking $M=100$,  We run the exponential midpoint rule with the uniform  time stepsize $k=1/N, \ N\in \mathbb{N}^{+}$. Table \ref{table1-1} indicates that the exact errors $E_{T}$ and $E_{\infty}$ has order two, however the order of exact error $E_1$ is around $1.75$. Table \ref{table1-2} presents that  $\varphi_1(-k_nA)$ has an impact on  the orders of the a  posteriori quantities $\mathcal{E}_U$, $\mathcal{E}_f$ and $\zeta_U$, and their orders  are optimal in theory, but  it has the order reduction phenomenon.
We also present the effectivity indices of the exponential midpoint rule in Table \ref{table1-3},  the effectivity indices of the lower estimator is asymptotically constant (around $0.2$) and the upper estimator is close to $2.9$.

\begin{table}
  \centering
\centering
\caption{The exact errors $E_T$, $E_{\infty}$ and $E_1$, and their convergence orders of exponential midpoint rule for Example \ref{example1}}\label{table1-1}
 \vskip -4mm
\renewcommand\arraystretch{1.2}
  \begin{tabular}{c|c|c|c|c|c|c}
    \hline
N& $E_{T}$&  order  & $E_{\infty}$ & order & $E_1$ &order\\
\hline
10&4.2546e-03  &	        & 4.2546e-03  &	            & 7.2489e-03	& \\
\hline
20&1.1009e-03  & 1.9504 & 1.1009e-03  & 1.9504	& 2.2030e-03	& 1.7183\\
\hline
40&2.8141e-04  & 1.9679	& 2.8141e-04  & 1.9679	& 6.5734e-04	& 1.7448\\
\hline
80&7.1380e-05  & 1.9791	& 7.1380e-05  & 1.9791	& 1.9526e-04	& 1.7512\\
\hline
160&1.8020e-05 & 1.9859 & 1.8020e-05  & 1.9859	& 5.7892e-05	& 1.7540\\
\hline
320&4.5351e-06 & 1.9904	& 4.5351e-06  & 1.9904	& 1.7131e-05	& 1.7568\\
\hline
  \end{tabular}
\end{table}

\begin{table}
\centering
\caption{ The a posteriori error quantities $\mathcal{E}_U$, $\mathcal{E}_f$ and  $\zeta_U$, and their convergence orders of exponential midpoint rule for   Example \ref{example1}} \label{table1-2} \vskip -4mm
\renewcommand\arraystretch{1.2}
\begin{tabular}{c|c|c|c|c|c|c}
    \hline
    N& $\mathcal{E}_{U}$&  order  & $\mathcal{E}_f$ & order & $\zeta_U$ &order\\
      \hline
10&6.1767e-03	&       	& 7.2112e-04	&        & 5.4989e-03	& \\
  \hline
20&1.9688e-03	& 1.6495	& 2.1784e-04	& 1.7270 & 1.6932e-03	& 1.6994 \\
  \hline
40&5.7947e-04	& 1.7645	& 6.0487e-05	& 1.8485 & 5.0883e-04	& 1.7345 \\
  \hline
80&1.6608e-04	& 1.8029	& 1.5988e-05	& 1.9197 & 1.5062e-04	& 1.7563 \\
  \hline
160&4.6912e-05	& 1.8238	& 4.1149e-06	& 1.9581 & 4.4408e-05	& 1.7620 \\
  \hline
320&1.3035e-05	& 1.8476	& 1.0443e-06	& 1.9783 & 1.3007e-05	& 1.7716\\     \hline
       \end{tabular}
\end{table}

\begin{table}[htbp]
\footnotesize
\centering
\caption{ Exponential midpoint rule for   Example \ref{example1}:
Lower and upper estimators of the error and effectivity indices}\label{table1-3} \vskip -4mm
\renewcommand\arraystretch{1.5}
\begin{tabular}{c|c|c|c|c|c|c}
\hline
    N& $\frac{2}{5}\mathcal{E}_U$ & $\frac{5}{3}Err_1$ & $Err_T+\frac{5}{3}Err_1$ & $\mathcal{E}_U+6\mathcal{E}_f+6\zeta_U$ & $ei_L$ & $ei_U$ \\[2pt]
      \hline
10&2.4707e-03	& 1.2082e-02	& 1.6336e-02	& 4.3497e-02	& 0.2045	& 2.6626 \\
    \hline
20&7.8752e-04	& 3.6717e-03	& 4.7726e-03	& 1.3435e-02	& 0.2145	& 2.8151 \\
  \hline
40&2.3179e-04	& 1.0956e-03	& 1.3770e-03	& 3.9954e-03	& 0.2116	& 2.9016 \\
  \hline
80&6.6432e-05	& 3.2543e-04	& 3.9681e-04	& 1.1657e-03	& 0.2041	& 2.9377 \\
  \hline
160&1.8765e-05	& 9.6487e-05	& 1.1451e-04	& 3.3805e-04	& 0.1945	& 2.9522 \\
  \hline
320&5.2140e-06	& 2.8552e-05	& 3.3087e-05	& 9.7343e-05	& 0.1826	& 2.9421 \\
\hline
       \end{tabular}
\end{table}

\begin{example}\label{example2} In this example, we consider  another  linear parabolic equation \eqref{problem1} with  $f=2\exp(-t)-x(1-x)\exp(-t)$ (see, e.g., \cite{Wang2021,Wang2022a}), the exact solution of this problem is
\begin{equation*}
u(x,t)=x(1-x)\exp(-t).
\end{equation*}
\end{example}

We take uniform time stepsize $N=10, 20, 40, 80, 160, 320$ with fixed spatial mesh size $M=100$, i.e., $\Delta x=1/M$, the exact errors $E_T$,  $E_{\infty}$,  $E_1$  and their orders are indicated in Table \ref{table2-1}.  From Table \ref{table2-2}, we observe that  the  orders of $\mathcal{E}_U$,  $\zeta_U$ are close to $1.8$ and  $1.75$, respectively, and the order of  $\mathcal{E}_f$ is around $1.9$. The effectivity indices $ei_L$ and $ei_U$ of the exponential midpoint rule are shown in Table \ref{table2-3}.
It seems that  the effectivity indices of  the lower and upper estimators are
asymptotically constant.
\begin{table}[htbp]
\footnotesize
\centering
\caption{The exact errors  $E_T$, $E_{\infty}$ and  $E_1$, and their convergence orders of exponential midpoint rule for  Example \ref{example2}}\label{table2-1} \vskip -4mm
\renewcommand\arraystretch{1.4}
\begin{tabular}{c|c|c|c|c|c|c}
\hline
    N& $E_{T}$&  order  & $E_{\infty}$ & order & $E_1$ &order\\
    \hline
10&5.2831e-04  &        & 9.9250e-04 &         & 2.1908e-03	&  \\
 \hline
20&1.3601e-04  & 1.9576	& 2.5919e-04 & 1.9370  & 6.8130e-04	& 1.6851 \\
 \hline
40&3.4791e-05  & 1.9670	& 6.6612e-05 & 1.9602  & 2.1019e-04	& 1.6967 \\
 \hline
80&8.8384e-06  & 1.9769	& 1.6990e-05 & 1.9711  & 6.4524e-05	& 1.7038 \\
 \hline
160&2.2345e-06 & 1.9838	& 4.3069e-06 & 1.9800  & 1.9662e-05	& 1.7144 \\
 \hline
320&5.6298e-07 & 1.9888	& 1.0872e-06 & 1.9861  & 5.9433e-06	& 1.7261 \\
\hline
       \end{tabular}
\end{table}
\begin{table}[htbp]
\footnotesize
\centering
\caption{ The a posteriori error quantities $\mathcal{E}_U$, $\mathcal{E}_f$ and $\zeta_U$, and their convergence orders of exponential midpoint rule for Example \ref{example2}}\label{table2-2} \vskip -4mm
\renewcommand\arraystretch{1.4}
\begin{tabular}{c|c|c|c|c|c|c}
\hline
    N& $\mathcal{E}_{U}$&  order  & $\mathcal{E}_f$ & order & $\zeta_U$ &order\\
  \hline
10&2.0272e-03  &        & 2.2813e-04 &		    & 1.9831e-03	 \\
\hline
20&6.1543e-04  & 1.7198	& 6.7828e-05 & 1.7499	& 6.2285e-04 & 1.6708 \\
\hline
40&1.8153e-04  & 1.7614	& 1.8701e-05 & 1.8587	& 1.8774e-04 & 1.7302 \\
\hline
80&5.3155e-05  & 1.7719	& 4.9279e-06 & 1.9241	& 5.5594e-05 & 1.7557 \\
\hline
160&1.5404e-05 & 1.7870	& 1.2667e-06 & 1.9599	& 1.6385e-05 & 1.7626 \\
\hline
320&4.3790e-06 & 1.8146	& 3.2134e-07 & 1.9790	& 4.7965e-06 & 1.7723 \\
\hline
       \end{tabular}
\end{table}

\begin{table}[htbp]
\footnotesize
\centering
\caption{ Exponential midpoint rule for  Example \ref{example2}:
Lower and upper estimators of the error and effectivity indices}\label{table2-3} \vskip -4mm
\renewcommand\arraystretch{1.5}
\begin{tabular}{c|c|c|c|c|c|c}
\hline
    N& $\frac{2}{5}\mathcal{E}_U$ & $\frac{5}{3}Err_1$ & $Err_T+\frac{5}{3}Err_1$ & $\mathcal{E}_U+6\mathcal{E}_f+6\zeta_U$ & $ei_L$ & $ei_U$ \\[2pt]
  \hline
10&8.1088e-04 & 3.6513e-03	& 4.1796e-03 & 1.5295e-02 & 0.2221	& 3.6593 \\
  \hline
20&2.4617e-04 & 1.1355e-03	& 1.2715e-03 & 4.7595e-03 & 0.2168	& 3.7432 \\
  \hline
40&7.2612e-05 & 3.5032e-04	& 3.8511e-04 & 1.4202e-03 & 0.2073	& 3.6877 \\
  \hline
80&2.1262e-05 & 1.0754e-04	& 1.1638e-04 & 4.1629e-04 & 0.1977	& 3.5770 \\
  \hline
160&6.1616e-06 & 3.2770e-05	& 3.5005e-05 & 1.2131e-04 & 0.1880	& 3.4657 \\
  \hline
320&1.7516e-06 & 9.9055e-06	& 1.0468e-05 & 3.5086e-05 & 0.1768	& 3.3516 \\
\hline
       \end{tabular}
\end{table}

\begin{example}\label{example3}
We consider the semilinear parabolic problem (see, e.g., \cite{Hochbruck2005a,Hochbruck2005b})
\begin{equation}\label{problem3}
\frac{\partial u}{\partial t}(x,t)-\frac{\partial ^2u}{\partial x^2}(x,t)=\frac{1}{1+u(x,t)^2}+\phi(x,t), \quad t\in[0,1], \quad x\in[0,1],
\end{equation}
with homogeneous Dirichlet boundary conditions. 
\end{example}
The $\phi(x,t)$ is chosen in such a  way that the exact solution is $u(x,t)=x(1-x)\exp(t)$. For the semilinear parabolic problem, it should be noted that  the standard exponential midpoint method is implicit and  iterations are required for the implementation of it. In this example, we use the fixed-point iteration, once the norm of the difference of two successive approximations is smaller than $10^{-10}$, then the iteration will be stopped.  Using the finite difference
to discretize the space derivative:
\begin{equation}
\begin{aligned}
u^{\prime}_i(t)&=\Delta x^{-2}[u_{i-1}(t)-2u_{i}(t)+u_{i+1}(t)]+\frac{1}{1+u_i(t)^2}+\phi_i(t), \quad t>0,\\
u_i(t)&=i\Delta x(1-i\Delta x), \quad i=1,2,\cdots,M-1, \quad t=0,\\
u_{0}(t)&=u_{M}(t)=0, \quad t\geq0,
\end{aligned}
\end{equation}
where $\Delta x=1/M, \ M=100$, $x_i=i\Delta x$, $u_i(t)\approx u(x_i,t)$, and $\phi_i(t)$ stands for $\phi$ at $(x_i,t)$.  As we stated in Section \ref{sec6}, the   upper  and lower error bounds  are  derived by  \eqref{semilinear upper2} and \eqref{nonlinear lower bound}, respectively. For this example, 
we choose $\theta=1/6$, $\lambda=1/6$, and $1+L^2/4\theta=2$, the a posteriori
quantities $\mathcal{E}_b$ and $\zeta_U$ are defined by
\begin{equation*}
\mathcal{E}_b:=\Big(\sum\limits_{n=1}^{N} \int_{t^{n-1}}^{t^{n}}\|\varphi_1(-k_nA)R_b(s)\|_{\star}^2ds\Big)^{\frac{1}{2}},
\end{equation*}
and
\begin{equation*}
\zeta_{U}=\Big(\sum\limits_{n=1}^{N} \int_{t^{n-1}}^{t^{n}} \|(\varphi_1(-k_nA)-I)R(s)\|_{\star}^2ds\Big)^{\frac{1}{2}}.
\end{equation*}
 The another a posteriori quantity  $\mathcal{E}_U$  is defined as
 \begin{equation*}
\mathcal{E}_U=\Big(\frac{1}{30}\sum\limits_{n=1}^N k_n^5\Big\|\frac{1}{k_n}\varphi_1(-k_nA)\big[B(t^{n-\frac{1}{2}},U^{n-\frac{1}{2}})-B(t^{n-1},U^{n-1})\big]\Big\|^2\Big)^{\frac{1}{2}}.
\end{equation*}
 Let us
define the effectivity indices $ei_L$ and $ei_U$ as:
 \begin{equation}
 ei_L:=\frac{lower \ estimator}{\frac{1}{3}Err_1},\qquad ei_U:=\frac{upper \ estimator}{Err_T+\frac{1}{3}Err_1}.
 \end{equation}
here $lower\ estimator := \mathcal{E}_U/12$ and $upper \ estimator:= 2\mathcal{E}_U+6\mathcal{E}_b+6\zeta_{U}$, respectively.

For this example, we take uniform time stepsize $k=1/N,\ N=10, 20, \ldots,360$. Table \ref{table3-1} indicates  the roughly second-order convergence rate of  exact errors $E_T$ and $E_{\infty}$, and the order of the error $E_1$ is around  $1.75$.  Table  \ref{table3-2} presents the order of the a posteriori quantities,  and the order reduction phenomenon of  $\mathcal{E}_U$ and $\zeta_U$ can be observed. Because the order of the exact error $E_1$ is around $1.75$, it is acceptable that the order of $\zeta_U$ is close to  $1.75$.   The effectivity indices $ei_L$ and $ei_U$ of the
exponential midpoint method are reported in Table \ref{table3-3}.

\begin{example}\label{example4}
Consider the Allen-Cahn equation \cite{Allen1979} 
\begin{equation}\label{semilinear problem2}
\begin{aligned}
  \frac{\partial u}{\partial t} (x,t)&=\epsilon \frac{\partial^2 u}{\partial x^2}+u-u^3, \qquad  \quad t \in [0,1], \quad x\in [-1,1], \\
  u(x,0)&=0.53x+0.47\sin(-1.5\pi x), \quad u(-1,t)=-1, \quad u(1,t)=1.
  \end{aligned}
\end{equation}
\end{example} 

For this example, the exact solution cannot be obtained, and  we take the exponential midpoint method with smaller time stepsize $k=1/10000$ as the reference solution. The space derivative is discretized  by the second-order finite difference, and taking the spatial mesh with $M = 80$. The uniform time stepsize is $k=1/N, N=10,20,40,80$. The exact errors  $E_T$, $E_{\infty}$, $E_1$ and their convergence orders of the exponential midpoint method are shown in Table \ref{table4-1}, and  the order of  $E_T$  is  around  $1.97$. Form Table \ref{table4-2},  we  observe that the entire function $\varphi_1(-k_nA)$ still affects the orders of a posteriori quantities $\mathcal{E}_U$, $\mathcal{E}_b$,  $\zeta_U$. 

\begin{table}[htbp]
\footnotesize
\centering
\caption{ The exact errors  $E_T$,  $E_{\infty}$ and $E_1$, and their convergence orders of exponential midpoint method for Example \ref{example3}}\label{table3-1} \vskip -4mm
\renewcommand\arraystretch{1.2}
\begin{tabular}{c|c|c|c|c|c|c}
\hline
    N& $E_{T}$&  order  & $E_{\infty}$ & order & $E_1$ &order\\
      \hline
10&4.3805e-03  &        & 4.3805e-03 &        & 7.3671e-03 & \\
  \hline
20&1.1327e-03  & 1.9513 & 1.1327e-03 & 1.9513 & 2.2405e-03 & 1.7172 \\
  \hline
40&2.8926e-04  & 1.9693	& 2.8926e-04 & 1.9693 & 6.6738e-04 & 1.7473 \\
  \hline
80&7.3312e-05  & 1.9803	& 7.3312e-05 & 1.9803 & 1.9768e-04 & 1.7554 \\
  \hline
160&1.8498e-05 & 1.9867	& 1.8498e-05 & 1.9867 & 5.8440e-05 & 1.7582 \\
  \hline
320&4.6536e-06 & 1.9909	& 4.6536e-06 & 1.9909 & 1.7252e-05 & 1.7602 \\
\hline
       \end{tabular}
\end{table}
\begin{table}[htbp]
\footnotesize
\centering
\caption{The a posteriori error quantities $\mathcal{E}_U$, $\mathcal{E}_b$ and $\zeta_U$, and their convergence orders of exponential midpoint method for  Example \ref{example3}}\label{table3-2} \vskip -4mm
\renewcommand\arraystretch{1.2}
\begin{tabular}{c|c|c|c|c|c|c}
\hline
    N& $\mathcal{E}_{U}$&  order  & $\mathcal{E}_b$ & order & $\zeta_U$ &order\\
    \hline
10&6.0946e-03  &	    & 1.1842e-03 &        & 5.6598e-03 &      \\
      \hline
20&1.9557e-03  & 1.6399 & 3.6621e-04 & 1.6932 & 1.7072e-03 & 1.7291 \\
  \hline
40&5.7762e-04  & 1.7595 & 1.0233e-04 & 1.8395 & 5.1015e-04 & 1.7427 \\
  \hline
80&1.6583e-04  & 1.8004 & 2.7086e-05 & 1.9176 & 1.5073e-04 & 1.7589 \\
  \hline
160&4.6882e-05 & 1.8227 & 6.9726e-06 & 1.9578 & 4.4417e-05 & 1.7628 \\
  \hline
320&1.3031e-05 & 1.8471	& 1.7695e-06 & 1.9784 & 1.3008e-05 & 1.7718 \\
\hline
       \end{tabular}
\end{table}

\begin{table}[htbp]
\footnotesize
\centering
\caption{ Exponential midpoint method for  Example \ref{example3}:
Lower and upper estimators of the error and effectivity indices}\label{table3-3} \vskip -4mm
\renewcommand\arraystretch{1.2}
\begin{tabular}{c|c|c|c|c|c|c}
\hline
    N& $\frac{1}{12}\mathcal{E}_U$ & $\frac{1}{3}Err_1$ & $Err_T+\frac{1}{3}Err_1$ & $2\mathcal{E}_U+6\mathcal{E}_b+6\zeta_U$ & $ei_L$ & $ei_U$ \\[2pt]
      \hline
10&5.0788e-04  & 2.4557e-03	& 6.8362e-03 & 5.3253e-02 & 0.2068	& 7.7899 \\
   \hline
20&1.6297e-04  & 7.4683e-04	& 1.8795e-03 & 1.6352e-02 & 0.2182	& 8.7002 \\
\hline
40&4.8135e-05  & 2.2246e-04	& 5.1172e-04 & 4.8301e-03 & 0.2164	& 9.4390 \\
\hline
80&1.3819e-05  & 6.5893e-05	& 1.3921e-04 & 1.3986e-03 & 0.2097	& 10.0467 \\
\hline
160&3.9068e-06 & 1.9480e-05	& 3.7978e-05 & 4.0210e-04 & 0.2006	& 10.5877 \\
\hline
320&1.0859e-06 & 5.7507e-06	& 1.0404e-05 & 1.1473e-04 & 0.1888	& 11.0275 \\
\hline
       \end{tabular}
\end{table}

 \begin{table}[htbp]
\footnotesize
\centering
\caption{The exact errors $E_T$, $E_{\infty}$ and  $E_1$, and their convergence orders of exponential midpoint method for  Example \ref{example4}}\label{table4-1} \vskip -4mm
\renewcommand\arraystretch{1.2}
\begin{tabular}{c|c|c|c|c|c|c}
\hline
    N& $E_{T}$&  order  & $E_{\infty}$ & order & $E_1$ &order\\
     \hline
10&1.8515e-04  &    	& 2.4752e-04  &          & 3.8948e-04  &  \\
 \hline
20&4.7053e-05 & 1.9763 & 9.3920e-05   &  1.3980  & 1.8163e-04  &  1.1005 \\
 \hline
40&1.1947e-05 & 1.9776 & 2.6510e-05   & 1.8249   & 7.4363e-05  &  1.2884\\
 \hline
80&3.0079e-06 & 1.9899 & 7.0526e-06   & 1.9103   & 2.1540e-05  &  1.7875 \\
\hline
       \end{tabular}
\end{table}

\begin{table}[htbp]
\footnotesize
\centering
\caption{The a posteriori error quantities $\mathcal{E}_U$, $\mathcal{E}_b$ and  $\zeta_U$, and their convergence orders of exponential midpoint method for  Example \ref{example4}}\label{table4-2} \vskip -4mm
\renewcommand\arraystretch{1.2}
\begin{tabular}{c|c|c|c|c|c|c}
\hline
    N& $\mathcal{E}_{U}$&  order  & $\mathcal{E}_b$ & order & $\zeta_U$ &order\\
  \hline
10& 6.8696e-04	&        &  2.2165e-03&        & 3.4676e-02 &       \\
  \hline
20&3.0548e-04	&  1.1691 & 7.9447e-04& 1.4802 & 1.3509e-02	&1.3600\\
  \hline
80&1.0873e-04	&  1.4904 & 3.0014e-04 &  1.4043& 4.4506e-03	& 1.6019 \\
  \hline
160&3.1158e-05	&   1.8030 & 9.6096e-05 & 1.6431 & 1.3032e-03	&   1.7720 \\
\hline
       \end{tabular}
\end{table}

\section{Concluding remarks}
In this paper, we studied the a posteriori error estimates  for linear and semilinear parabolic equations by using the exponential midpoint method in time. For linear problems, we introduced a continuous and piecewise linear $U(t)$ of the exponential midpoint rule to approximate $u(t)$, and derived the suboptimal order estimates.  In order to recover optimal order, we introduced a continuous and piecewise quadratic time reconstruction $\hat{U}$  of $U$ by using the property of the entire function. Then the lower and upper error bounds in the $L^2(0,T;V)$ and $L^{\infty}(0,T;H)$-norms were derived in detail which  depended only on the discrete solution and the approximation of the entire function. We also extended this technique to the semilinear problems, and established optimal order a posteriori error estimates.

We have implemented various numerical experiments for the exponential midpoint method for some linear and semilinear examples. For both cases, these experiments exactly verify the theoretical analysis. In this paper, we only present the a posteriori error estimates for the exponential midpoint method for parabolic problems, whether it is applicable to exponential Runge-Kutta methods of collocation type will be further investigated.

\section{Declarations}

\bmhead{Acknowledgements}
The authors are very grateful to the editor and anonymous referees for their invaluable comments and suggestions which helped to improve the manuscript.

\bmhead{Funding}
The first author was partially supported by the Natural Science Foundation of China (Grant No. 12271367, 12071419). The second author is the corresponding author. He was supported by the Natural Science Foundation of China (Grant No. 12271367), Shanghai Science and Technology Planning Projects (Grant No. 20JC1414200), and Natural Science Foundation of Shanghai (Grant No. 20ZR1441200).

\bmhead{Conflicts of interest}
The authors declare there is no conflicts of interest regarding the publication of this paper.

\bmhead{Ethical approval}
Not applicable.

\bmhead{Contributions}
All the authors contributed equally.

\bmhead{Availability of data and material}
All data, models, or code generated or used during the study are available from the corresponding author by request.

\bibliography{sn-bibliography}

\end{document}